\numberwithin{equation}{section}
\newtheorem{thm}[equation]{Theorem}
\newtheorem{cor}[equation]{Corollary}
\newtheorem{lem}[equation]{Lemma}
\def\kF{\mathfrak{f}}
\def\cInd{\mathrm{c\!\!-\!\!Ind}}
\def\rss{\mathrm{rss}}
\def\LLC{\mathrm{LLC}}
\def\g{\mathbf{g}}
\def\Irr{\mathrm{Irr}}
\def\sgn{\mathrm{sgn}}
\def\JL{\mathfrak{JL}}
\def\fg{\mathfrak{g}}
\def\fo{\mathfrak{o}}
\def\fp{\mathfrak{p}}
\def\fe{\mathfrak{e}}
\def\fw{\mathfrak{w}}
\def\rZ{\mathrm{Z}}
\def\Cent{\mathrm{Cent}}
\def\Gal{\mathrm{Gal}}
\def\H{\mathrm{H}}
\def\im{\mathrm{im}}
\def\Ind{\mathrm{Ind}}
\def\val{\mathrm{v}}
\def\reg{\mathrm{reg}}
\def\JL{\mathrm{JL}}
\def\SO{\mathrm{SO}}
\def\PGL{\mathrm{PGL}}
\def\GL{\mathrm{GL}}
\def\bW{\mathbf{W}}
\def\R{\mathbb{R}}
\def\C{\mathbb{C}}
\def\Z{\mathbb{Z}}
\def\Q{\mathbb{Q}}
\def\Qp{\mathbb{Q}_p}
\def\SL{\mathrm{SL}}
\def\PSL{\mathrm{PSL}}
\def\im{\mathrm{im}}
\def\bbT{\mathbb{T}}
\def\W{\mathbf{W}}
\def\fo{\mathfrak{o}}
\def\fp{\mathfrak{p}}
\def\LLC{\mathrm{{LLC}}}
\def\rig{{\mathrm{\rig}}}
\def\trace{\mathrm{trace}}
\def\rig{{\mathrm{rig}}}
\def\rN{{\mathrm{N}}}
\newcommand{\sS}{\mathscr{S}}
\def\g{\mathbf{g}}
\DeclareSymbolFont{bbold}{U}{bbold}{m}{n}
\DeclareSymbolFontAlphabet{\mathbbold}{bbold}
\begin{document}

\title[Endoscopy]{Comparison of the Sally-Shalika character formulas with the endoscopic character identities for  $\SL_2$}

\author[A.-M. Aubert]{Anne-Marie Aubert}
\address{Sorbonne Universit\'e and Universit\'e Paris Cit\'e, CNRS, IMJ-PRG, 
F-75005 Paris, France}
\email{anne-marie.aubert@imj-prg.fr}
\author[R. Plymen]{Roger Plymen}
\address
{School of Mathematics, Southampton University, Southampton SO17 1BJ,  England 
\emph{and} School of Mathematics, Alan Turing Building, Manchester University, Manchester M13 9PL, England}
\email{r.j.plymen@soton.ac.uk, roger.j.plymen@manchester.ac.uk}

\keywords{Endoscopy, characters, $p$-adic, $\SL_2$}
\maketitle

\begin{abstract}  We consider the depth-zero supercuspidal $L$-packets of  $\SL_2(F)$ where $F$ is a non-archimedean local field of characteristic zero.   
We compare the explicit endoscopic character identities for $\SL_2(F)$ with  the classical character formulas of Sally-Shalika.   Our main result concerns the 
supercuspidal $L$-packet of size $4$.   For this $L$-packet, we show how the norm $1$ groups $H_1, H_2, H_3$ in the three quadratic extensions of $F$ play a crucial role in the 
endoscopic character identities for $\SL_2(F)$.

\end{abstract}

\tableofcontents

\section{Introduction}  We consider the depth-zero supercuspidal $L$-packets of  $\SL_2$ where $F$ is a non-archimedean local field of characteristic zero.  We compare the explicit endoscopic character identities for $\SL_2$ with  the classical character formulas of Sally-Shalika.

Our main result concerns the supercuspidal $L$-packet of size $4$.  
 For this $L$-packet, we show how the norm $1$ groups $H_1, H_2, H_3$ in the three quadratic extensions of $F$ play a crucial role in the 
endoscopic character identities for $\SL_2(F)$.


The local field $F$ is a finite extension of $\Qp$.    We shall assume that $p > 2$. 
Let $q = q_F$ denote the cardinality of the residue field of $F$. Let $\val(x)$ denote the valuation of $x \in F^\times$.  Let $\fo_F$ denote the ring of integers of $F$,  let $\varpi$ be a uniformiser for $F$,  let $\fp_F=\varpi\fo_F$ be the maximal ideal of $\fo_F$, and let  $\varepsilon$ be a fixed nonsquare element in $\fo_F^\times$.  In that case, $\{1,\varpi,\varepsilon,\varepsilon\varpi\}$  is a set of representatives of $F^\times/(F^\times)^2$. 

Let $G = \SL_2(F)$, and let $\rZ(G)$ denote the center of $G$.  We shall focus on the elliptic torus
\begin{equation} \label{eqn:tori}
T^\varepsilon:=
\left\{\left(
\begin{matrix}
a&b\cr
b\varepsilon&a
\end{matrix}\right)\;:\; a,b\in\fo_F, a^2-\varepsilon b^2=1
\right\}.
\end{equation}
This elliptic torus $T^\varepsilon$ is a representative of the single stable conjugacy class of unramified elliptic maximal $F$-tori in $\SL_2(F)$.   

We will define $f$ as follows:
\begin{equation} \label{eqn:definition-of-f}
f \colon T^\varepsilon \to \Z, \quad \quad 
\left(
\begin{matrix}
a&b\cr
b\varepsilon&a
\end{matrix}\right) \mapsto (-q)^{\val(b)} .
\end{equation}

We denote by $T^{\varepsilon}_1$ the pro-unipotent radical of $T^\varepsilon$, namely
\begin{equation} \label{T_1}
T^\varepsilon_1:=
\left\{\left(
\begin{matrix}
a&b\cr
b\varepsilon &a
\end{matrix}\right) \in T^\varepsilon \;:\; a \in 1 + \fp_F,\; b \in \fp_F \; \right\}.
\end{equation}

We will say, following \cite{ADSS}, that  $\gamma$ is \emph{near the identity} if $\gamma \in T_1^\varepsilon$,
and \emph{far from the identity} otherwise.  

We have the unipotent subgroup
\[
U(F) =\left\{ \left(
\begin{matrix}
1 & x \cr
0 & 1
\end{matrix}\right) : x \in F \right \} \subset \SL_2(F)
\]

 A Whittaker datum $\fw$  for $\SL_2(F)$  is a pair $(U, \theta)$ where   $\theta : U(F) \to \C^\times$ is a non-trivial character.     
 An irreducible smooth representation $(\pi, V)$ of $\SL_2(F)$  is called $\fw$-generic if the restriction of $\pi$ to $U(F)$ contains $\theta$.   
In this article,  we choose and fix a Whittaker datum $\fw$ for $\SL_2(F)$.   

If $G = \SL_2(F)$ then the Langlands dual group $\widehat{G}$ is $\PGL_2(\C)$ and the $L$-group ${}^LG$ is $\PGL_2(\C) \rtimes \W_F$.
In the context of this article, $\phi$ will denote a Langlands parameter
\[
\phi : \W_F \to {}^LG.
\]  
Let $S_\phi$  denote the centralizer in $\PGL_2(\C)$ of the image of $\phi$, and $\Irr(S_\phi)$ the set of isomorphism classes of irreducible representations of $S_\phi$. We have a bijective map
\begin{equation}
\iota_\phi  \colon \Pi_\phi(\SL_2(F))  \to \Irr(S_\phi)
\end{equation}
that sends the $\fw$-generic representation of $\SL_2(F)$ to the trivial character of $S_\phi$.   

Given  $s \in S_\phi$, we attach to $(\phi,s)$ the  virtual character    
\begin{equation} \label{eqn:virtual}                                                       
\Theta_{\phi, s} := \sum_{\pi\in\Pi_\phi(\SL_2(F)) } \trace(\iota_\phi(\pi))(s) \cdot \Theta_{\pi}.
\end{equation}
Let $\widehat{H}$ denote the identity component of the centralizer of $s$ in $\widehat{G}$. The \emph{extended endoscopic triple}, as defined in \cite[\S6.2]{T},  is 
        \[
        \fe(s): = (s, H, {}^L\eta).
        \]     Then $\phi$ will factorize as follows:
       \[
       \begin{CD}
       \W_F @> \phi^H >> {}^LH @> {}^L\eta >> {}^LG.
       \end{CD}
       \]

       The map $\phi^H$ will be called the  \emph{little parameter} associated to $\phi$.   This  leads to the definition of the \emph{stable character}.   
 The stable character $\sS\Theta_{\phi^H}$ is defined as 
 \[
 \sS\Theta_{\phi^H} := \Theta_{\phi^H, 1}.
 \] 
 Note that $H$ is an elliptic torus and the right-hand-side collapses to a single character of $H$. 
 
Then the  endoscopic character identity for $\SL_2$ is     
\begin{eqnarray}\label{endo}
 \sum_{\delta \in H(F)} \Delta[\fw,\fe(s)](\delta, \gamma) \sS\Theta_{\phi^H}(\delta) = \Theta_{\phi,s}(\gamma) 
\end{eqnarray}       
where $\Delta[\fw,\fe(s)]$ is the Langlands-Shelstad transfer factor.
 In the context of quasi-split connected reductive groups, our reference for the endoscopic character identity is equation (6.4) in the article by Taibi \cite{T}.       
  From a purely notational point of view, we prefer to switch $\gamma$ and $\delta$ in Taibi's equation (6.4).

 The explicit endoscopic character identities, for the depth-zero supercuspidal $L$-packets of $\SL_2$, are stated in Theorems \ref{A} and \ref{B}.


The characters of the discrete series for $\SL_2(F)$  were obtained by Sally-Shalika \cite{SS}.     In particular, for the depth-zero supercuspidal $L$-packets, the characters 
appear in Table 3 of \cite{SS}.  They are given as locally integrable functions on the maximal torus of $\SL_2(F)$ and on the elliptic tori in $\SL_2(F)$.   Many years later, proofs were supplied by  Adler, DeBacker, Sally and Spice in \cite{ADSS}.   For the most part, we find it more convenient to refer to this paper  \cite{ADSS}. 

Let $E$ denote the unramified quadratic extension $F(\sqrt \varepsilon)$ and let $E^1$ denote the group of elements of norm $1$ in $E$.   We have a canonical isomorphism of elliptic tori:
\begin{eqnarray}
E^1 \to T^\varepsilon, \quad \quad a + b\sqrt \varepsilon \mapsto \left(
\begin{matrix}
a&b\cr
b\varepsilon&a
\end{matrix}\right)
\end{eqnarray}
and we shall often identify $E^1$ with $T^\varepsilon$.     A character $\psi$ of $E^1$ is \emph{quadratic} if $\psi^2 = 1$ and 
\emph{non-quadratic} if $\psi^2 \neq 1$.   The \emph{regular supercuspidal parameters} are defined by Kaletha \cite[Def. 5.2.3]{Ka2}.   
The group $\SL_2(F)$ admits a unique non-regular parameter, which features in Theorem \ref{B}.

When we combine Theorems \ref{vch}  and  \ref{ENDO}, we obtain the following result.


\begin{thm}\label{A}    Let $\gamma$ be a regular, semisimple element in $T^\varepsilon$.  Let $\Pi_\phi(\SL_2(F))$ be the depth-zero supercuspidal $L$-packet with regular $L$-parameter $\phi$.    
 Let $s$ in (\ref{endo}) denote the non-trivial element in $S_\phi$.  Then the endoscopic group $H$ is $E^1$.   The little parameter $\phi^H$ is the Langlands parameter of a non-quadratic depth-zero character
 $\psi$ of $H$.   
 We have  
  \begin{eqnarray*}\label{AA}
 \sum_{\delta \in H(F)}  \Delta[\fw, \fe(s)](\delta, \gamma) \sS\Theta_{\phi^H} (\delta) 
&=&- f(\gamma)(\psi(\gamma) + \psi(1/\gamma))
\end{eqnarray*}
and
\begin{eqnarray*}
\Theta_{\phi, s}(\gamma) &=&
\begin{cases} - \psi(\gamma) - \psi(1/\gamma) \quad  \text{when $\gamma$ is far from the identity} \\
-2f(\gamma) \quad \text{when $\gamma$ is near the identity} 
\end{cases}
  \end{eqnarray*}
\end{thm}

These equations are consistent because $f(\gamma) = 1$ when $\gamma$ is far from the identity, and $\psi(\gamma) = 1$ when $\gamma$ is near the identity.   

The three quadratic extensions of $F$ are 
\[
E_1 = F(\sqrt \varepsilon),  \quad \quad E_2 = F(\sqrt \varpi), \quad \quad E_3 = F(\sqrt{\varepsilon \varpi}).
\]
Let $H_j$ denote the group of norm $1$ elements in $E_j$ with $j = 1,2,3$. 

The left-hand-side of the endoscopic character identity depends on the element $s$ in $S_\phi$.    In order to mark this dependence, we introduce the following definition:  
\[
\mathscr{E}(s): =  \sum_{\delta \in H}  \Delta[\fw, \fe(s)](\delta, -) \sS\Theta_{\phi^{H}} (\delta) 
\]
The value of $\mathscr{E}(s)$ at an element $\gamma$ will be denoted as follows: 
\[
\mathscr{E}(s:\gamma): =  \sum_{\delta \in H}  \Delta[\fw, \fe(s)](\delta, \gamma) \sS\Theta_{\phi^{H}} (\delta) 
\]

The field $F$ admits a unique biquadratic extension field $K/F$ and the non-regular parameter $\phi$ factors through the Galois group $\Gal(K/F)$, see \S7.    Let $s_1, s_2$ and $s_3$ denote the images in $\PGL_2(\C)$ of the matrices
\[
\left(\begin{matrix}1&0\\0&-1\end{matrix}\right), \quad \left(\begin{matrix}0&1\\-1&0\end{matrix}\right),  \quad   \left(\begin{matrix}0&1\\1&0\end{matrix}\right).
\] 
We attach to $s_i$ the extended endoscopic 
triple $\fe(s_i) = (s_i, H_i, {}^L\eta_i)$ where ${}^L\eta_i : {}^LH_i \to {}^LG$ extends the embedding $\widehat{H_i} \to \widehat{G}$.

  When we combine Theorems \ref{FAR-NON-REG}, 
\ref{NEAR-NON-REG}  and  \ref{abc}, we obtain the following result.   An interesting feature of this result is that all three endoscopic groups $H_1, H_2, H_3$ are involved.   

\begin{thm} \label{B}   Let $\gamma$ be a regular, semisimple element in $T^\varepsilon$.  Let $\Pi_\phi$ be the unique depth-zero supercuspidal $L$-packet with non-regular parameter $\phi$.  
Let $\psi_0$ denote the unique quadratic character of $T^\varepsilon$.   
We have 
\begin{eqnarray*}
\mathscr{E}(s_1 : \gamma) &=&  -2 f(\gamma) \psi_0(\gamma) \\
\mathscr{E}(s_2 : \gamma) &=& 0\\
\mathscr{E}(s_3:\gamma) & = & 0
\end{eqnarray*}
and 
\begin{eqnarray*}
\Theta_{\phi, s_1} &=&
\begin{cases} 
-2\psi_0(\gamma) \quad  \text{when $\gamma$ is far from the identity}\\
 -2f(\gamma)   \quad \text{when $\gamma$ is near the identity} 
\end{cases}\\
\Theta_{\phi, s_2}  &=& 0 \\
\Theta_{\phi, s_3} &=& 0.
\end{eqnarray*}
\end{thm}

\emph{On the issue of uniqueness}.   In the context of a regular parameter $\phi$, 
the uniqueness of the map $\iota_\phi \colon \Pi_\phi \to \Irr(S_\phi)$ is clear. For the $\fw$-generic element in $\Pi_\phi$ is sent to the trivial character of $S_\phi$, whereas the other element in $\Pi_\phi$ is sent to the non-trivial character of $S_\phi$.  

In the context of the non-regular parameter $\phi$,  the left-hand-side of (\ref{endo}) depends, in its definition, on the map $\iota_\phi\colon \Pi_\phi \to \Irr(S_\phi)$.  The  endoscopic character identity selects a unique map
 $\iota_\phi : \Pi_\phi \to \Irr(S_\phi)$, see \S4.   

  
We discovered a crucial typo in the statement of Theorem 15.2 in \cite{ADSS}; the corrected statement appears in \S5.  

  \textbf{Acknowledgement}. We would like to thank Loren Spice for a valuable exchange of emails, Shaun Stevens for valuable discussions  and Tasho Kaletha for 
 sending us constructive criticism.

\section{Elliptic tori}   
\subsubsection{Elliptic tori} There are two tori relevant for us:
 Let $\varepsilon$ be a fixed nonsquare element in  $\fo_F^{\times}$, then

\begin{equation} \label{eqn:Torus-Tepsilon}
T^\varepsilon:=
\left\{\left(
\begin{matrix}
a&b\cr
b\varepsilon &a
\end{matrix}\right)\;:\; a,b\in\fo_F, a^2-\varepsilon b^2=1
\right\}
\end{equation}
and 
\begin{equation} \label{eqn:Torus-Tepsilon-varpi}
T^{\varepsilon, \varpi} :=
\left\{\left(
\begin{matrix}
a&b\varpi^{-1} \cr
b\varepsilon \varpi &a
\end{matrix}\right)\;:\; a,b\in\fo_F, a^2-\varepsilon b^2=1
\right\}.
\end{equation}
These two tori admit the same splitting field, namely $F(\sqrt \varepsilon)$ and are therefore stably conjugate, or, equivalently,
$\GL_2(F)$-conjugate,    thanks to the following identity in $\GL_2(F)$:
\begin{equation}\label{ggg}
\left(\begin{array}{cc}
1 & 0\\
0 & \varpi
\end{array}
\right)
\left(\begin{array}{cc}
a & b\\
\varepsilon b & a
\end{array}
\right)
\left(\begin{array}{cc}
1 & 0\\
0 & \varpi^{-1}
\end{array}
\right)
 = 
\left(\begin{array}{cc}
a & \varpi^{-1}b\\
\varepsilon \varpi b & a
\end{array}\right).
\end{equation}
There is a  single stable conjugacy class of unramified elliptic maximal $F$-tori,  represented by $T^\varepsilon$.   It splits into two 
distinct $G$-conjugacy classes, represented by $T^\varepsilon$ and $T^{\varepsilon, \varpi}$.   

Let $E^1$ denote the group of norm $1$ elements in $E$.   We have, following \cite[\S3.1]{ADSS}, canonical isomorphisms of $F$-tori:
\[
T^\varepsilon \to E^1, \quad \left(
\begin{matrix}
a&b\cr
b\varepsilon &a
\end{matrix}\right)
\mapsto a + b\sqrt\varepsilon
\]
\[
T^{\varepsilon, \varpi} \to E^1, \quad \left(
\begin{matrix}
a&b\varpi^{-1}\cr
b\varepsilon \varpi &a
\end{matrix}\right)
\mapsto a + b\sqrt\varepsilon.
\]
Given $a + b\sqrt \varepsilon \in E^1$ we shall, following \cite[\S3.1]{ADSS} write 
\[
\mathrm{Im}_\varepsilon(a + b\sqrt \varepsilon) = b.
\]  
We have
\begin{eqnarray}\label{gdot}
\mathrm{Im}_\varepsilon (\g \cdot \gamma) =  \mathrm{Im}_\varepsilon (\gamma),
\end{eqnarray}
for every $\gamma\in T^{\varepsilon}$, where 
\begin{equation} \label{eqn:g}
\mathbf{g}: = \left(
\begin{array}{cc}
1 & 0\\
0 & \varpi
\end{array}
\right) \in \GL_2(F)
\end{equation}
and $\mathbf{g} \cdot \gamma: = \mathbf{g} \gamma \mathbf{g}^{-1}$.
The equation \eqref{ggg} can be rephrased as the following statement:
\begin{equation} \label{eqn:tori}
\g \cdot T^\varepsilon =  \g T^\varepsilon \g^{-1} = T^{\varepsilon, \varpi}.
\end{equation}
We also note that
 \[
 \sgn_\varepsilon(x) = (-1)^{\val(x)}
 \]
 for all $x \in F$, see (2.1) in \cite {ADSS}. Then we have, using (\ref{gdot}),
 \begin{eqnarray}\label{SGN}
 \sgn_\varepsilon(\varpi^{-1} \mathrm{Im}_\varepsilon(\g \cdot \gamma))  &=& \sgn_\varepsilon(\varpi^{-1} \mathrm{Im}_\varepsilon(\gamma)) \\
  &=& \sgn_\varepsilon(\varpi^{-1}) \sgn_\varepsilon( \mathrm{Im}_\varepsilon(\gamma)) \nonumber \\
 &=& (-1)^{\val(\varpi^{-1})}\sgn_\varepsilon  (\mathrm{Im}_\varepsilon(\gamma)) \nonumber\\
 &=&- \sgn_\varepsilon( \mathrm{Im}_\varepsilon(\gamma)). \nonumber
 \end{eqnarray}

\subsubsection{Weyl discriminant} In \cite[Definition 3.7]{ADSS}, the Weyl discriminant $D_G$ is defined as follows:  
\[
D_G \left(
\begin{array}{cc}
a & b\\
c & d
\end{array}
\right)
= (a + d)^2 - 4.
\]

  
Now
\[
\gamma = \left(
\begin{matrix}
a&b\cr
b\varepsilon &a
\end{matrix}\right) \in T^\varepsilon
\implies D_G(\gamma) =  4a^2 - 4 = 4 \varepsilon b^2
\]
so that
\[
|D_G(\gamma)|^{1/2}  = |b|.
\]

\subsubsection{The function $f$} We set
\begin{equation} \label{eqn:fgamma}
f(\gamma):= \frac{\sgn_\varepsilon(\mathrm{Im}_\varepsilon(\gamma))}{|D_G(\gamma)|^{1/2}} .
\end{equation}
Note that 
\[
\gamma \quad  \textrm {is regular} \iff b \neq 0.
\]
In that case, we have
\begin{eqnarray*} 
f(\gamma)&=& \frac{\sgn_\varepsilon(b)}{|b|}\\
&=& (-1)^{\val(b)} q^{\val(b)}\\
&=& (-q)^{\val(b)}.
\end{eqnarray*}

It is worth noting that $f$ is invariant under the Galois group $\Gal(F(\sqrt \varepsilon)/F)$.  If $c = a + b\sqrt \varepsilon$ then we have
\begin{equation}
f(\overline{c}) = f(a - b\sqrt \varepsilon) = (-q)^{\val(-b)} = (-q)^{\val(b)} = f(c).
\end{equation}\label{fGalois}

The following Lemma is proved in \cite[p.65]{ADSS}.    We offer a short proof.

\begin{lem}\label{farfrom}  Let $\gamma$ be a regular semisimple element 
in the elliptic torus $T^\varepsilon$ which is far from the identity.   Then we have $f(\gamma) = 1$.
\end{lem}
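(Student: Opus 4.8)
The plan is to reduce the statement to a purely valuation-theoretic computation about the entry $b$ of a regular semisimple element $\gamma = \matje{a}{b}{b\varepsilon}{a} \in T^\varepsilon$. By the discussion in \S2.0.3, we already have the clean formula $f(\gamma) = (-q)^{\val(b)}$ whenever $\gamma$ is regular, i.e.\ whenever $b \neq 0$. So the whole content of the lemma is: \emph{if $\gamma$ is far from the identity, then $\val(b) = 0$}, equivalently $b \in \fo_F^\times$. Granting that, $f(\gamma) = (-q)^0 = 1$ and we are done.

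First I would recall the definition: $\gamma$ is far from the identity means $\gamma \notin \rZ(G)\,G_{0+}$, where $\rZ(G) = \{\pm I\}$ and $G_{0+} = \bigcup_{x \in \cB(G)} G_{x,0+}$. Since $\gamma$ lies in the compact torus $T^\varepsilon$, I would use the identification $G_{0+}\cap T^\varepsilon = T^\varepsilon_1$ established in \S2.0.4, together with the analogous statement for $-\gamma$; concretely, an element of $T^\varepsilon$ lies in $\rZ(G)G_{0+}$ if and only if it lies in $T^\varepsilon_1 \cup (-T^\varepsilon_1)$. Unwinding \eqref{eqn:Torus-Tepsilon}, membership in $T^\varepsilon_1$ forces $b \in \fp_F$, i.e.\ $\val(b) \geq 1$; membership in $-T^\varepsilon_1$ likewise forces $b \in \fp_F$ (the bottom-left entry of $-\gamma$ is $-b\varepsilon$, whose valuation equals $\val(b)$). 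Hence $\gamma$ far from the identity forces $\val(b) = 0$ — note $\val(b)$ can never be negative since $a,b \in \fo_F$ — which is exactly what we need.

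Alternatively, and perhaps more transparently, one can argue through the isomorphism $T^\varepsilon \isom E^1$ of \S2.0.1 sending $\gamma$ to $c = a + b\sqrt{\varepsilon}$, under which $T^\varepsilon_1$ corresponds to the group $1 + \fp_E$ (the pro-$p$ part of $\cO_E^\times$, with $E = F(\sqrt\varepsilon)$ unramified over $F$). Being far from the identity then says $c \notin \{\pm 1\}(1 + \fp_E)$, i.e.\ the reduction $\bar c$ in the residue field $k_E$ is not $\pm 1$; combined with $\Nrd(c) = a^2 - \varepsilon b^2 = 1$ and the fact that $E/F$ is unramified, a short manipulation shows $\bar b \neq 0$ in $k_E$, i.e.\ $\val(b) = 0$. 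Either route is routine once the identifications in \S2 are in hand.

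The only mild obstacle is bookkeeping: making precise the claim that an element of $T^\varepsilon$ near the identity (in the $\rZ(G)G_{0+}$ sense, applied to $\gamma$ or $-\gamma$) has $b \in \fp_F$, which requires invoking $G_{0+}\cap T^\varepsilon = T^\varepsilon_1$ from \S2.0.4 and handling the central twist by $-I$. There is no hard analysis here — it is purely a matter of tracing through the Moy--Prasad filtration of $T^\varepsilon$ and the shape of the matrices in \eqref{eqn:Torus-Tepsilon}. I would therefore present the proof in two lines: (i) far from the identity $\Rightarrow \val(b) = 0$, by the above; (ii) $f(\gamma) = (-q)^{\val(b)} = 1$ by \eqref{eqn:fgamma} and the computation in \S2.0.3.
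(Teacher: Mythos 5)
Your main argument runs in the wrong logical direction. What you actually establish is $\gamma \in T^\varepsilon_1 \cup (-T^\varepsilon_1) \Rightarrow b \in \fp_F$, which (since $T^\varepsilon \cap \rZ(G)\,G_{0+} = T^\varepsilon_1 \cup (-T^\varepsilon_1)$) gives the implication ``$\val(b)=0 \Rightarrow \gamma$ far from the identity.'' But the lemma requires the converse, ``far from the identity $\Rightarrow \val(b)=0$,'' whose contrapositive is ``$b \in \fp_F \Rightarrow \gamma \in T^\varepsilon_1 \cup (-T^\varepsilon_1)$.'' That step is genuinely different and is missing: given only $b \in \fp_F$, the shape of $T^\varepsilon_1$ does not by itself force $a \in 1+\fp_F$ or $-a \in 1+\fp_F$. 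You must invoke the defining relation $a^2 - \varepsilon b^2 = 1$, reduce mod $\fp_F$ to get $\bar a^2 = 1$, hence $\bar a = \pm 1$ in the residue field, hence $a \in 1+\fp_F$ or $a \in -1+\fp_F$, so that $\gamma$ or $-\gamma$ lies in $T^\varepsilon_1$. As written, the sentence ``Hence $\gamma$ far from the identity forces $\val(b)=0$'' is a non-sequitur.

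Your alternative sketch via the isomorphism $T^\varepsilon \isom E^1$ does recognize that the norm condition $a^2 - \varepsilon b^2 = 1$ is what drives the argument, which is the right idea, but it is left at ``a short manipulation,'' and the ``mild obstacle'' you flag at the end — making precise that near the identity forces $b\in\fp_F$ — is again the direction you already have, not the one you are missing. For the record, the paper does not give its own proof of this lemma; it simply cites \cite[p.65]{ADSS}, so there is no in-paper argument to compare against. Your plan is the natural one and is salvageable; it just needs the converse implication (via the determinant relation) actually carried out.
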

\begin{proof} We recall the pro-unipotent radical $T^\varepsilon_1$ of $T^\varepsilon$, see (\ref{T_1}).  If $\gamma$ is far from the identity, then we have 
$\gamma \in T^\varepsilon \setminus T^\varepsilon_1$ and so $b \in \fo_F \setminus \fp_F$.  Therefore $\val(b) = 0$. 
\end{proof}

\subsubsection{Elliptic torus in $\SL_2(\kF)$}   The quotient $T^\varepsilon/T_1^\varepsilon$ is the unique maximal elliptic $\kF$-torus $\bbT^\epsilon(\kF)$ in $\SL_2(\kF)$:
\begin{equation}
\bbT^\epsilon (\kF)=\left\{ \left(\begin{matrix}
   a & b\\
 \epsilon b & a
 \end{matrix}
 \right)\,:\, \text{$a,b\in \kF$ and $a^2-\epsilon b^2=1$}\right\}
 \end{equation} 
 where $\kF$ is the residue field of $F$.   

  
The group $E^1$ of norm one elements in $E = F(\sqrt \varepsilon)$ admits a unique quadratic character $\psi_0$ of depth zero which therefore factors through $\bbT^\epsilon$.  
An explicit formula for $\psi_0$ (which we shall not need) is given in \cite[\S 9.2]{ADSS}.

\section{Depth-zero supercuspidal representations of $\SL_2(F)$}  
Let $I$ denote a line-segment (chamber) in the reduced building  (tree) of $\SL_2(F)$.   The ends of the interval $I$ will be denoted $x_0$, $x_1$ .   The stabilizer of $x_i$ will be denoted $G_{x_i}$, for $i=0,1$. We have  $G_{x_0}=\SL_2(\fo_F)$, and  $G_{x_1}$ is the conjugate of $G_{x_0}$ under the element $\mathbf{g}$ of $\GL_2(F)$ defined in \eqref{eqn:g}.
The reductive quotients of $G_{x_0}$ and $G_{x_1}$ are both  isomorphic to  $\SL_2(\kF)$,  and $\{G_{x_0},G_{x_1}\}$ is a set of representatives of the maximal parahoric subgroups of $\SL_2(F)$.

Let $\psi$ be a character of $\bbT^\epsilon(\kF)$ such that $\psi \ne \psi^{-1}$.  The Deligne-Lusztig virtual character $R_{\bbT^\epsilon,\psi}^{\SL_2}$ is irreducible,  and  we denote by $|R_{\bbT^\epsilon,\psi}|$  its inflation
to $G_{x_0}$ and $G_{x_1}$. We put
\begin{equation} \label{eqn:reg}
\pi^+=\pi(\bbT^\epsilon,\psi):= c-\Ind_{G_{x_0}}^G |R_{\bbT^\epsilon,\psi}|\quad\text{and}\quad\pi^-:= c-\Ind_{G_{x_1}}^G |R_{\bbT^\epsilon,\psi}|.
\end{equation}
The representations $\pi^+$ and $\pi^-$  are depth-zero regular supercuspidal  irreducible representations of $G$ in the terminology of \cite{Ka2}.

Let $\psi_0$ be the quadratic character of $\bbT^\epsilon(\kF)$. The Deligne-Lusztig virtual character $R_{\bbT^\epsilon,\psi_0}^{\SL_2}$  decomposes as 
\begin{equation}
R_{\bbT^\epsilon,\psi_0}^{\SL_2}=-R_{\bbT^\epsilon,\psi_0}^+ - R_{\bbT^\epsilon,\psi_0}^-,
\end{equation}
where $R_{\bbT^\epsilon,\psi_0}^\pm$ are irreducible cuspidal representations of $\SL_2(\kF)$ of dimension $\frac{q-1}{2}$ (see \cite[Table~12.1]{DM}). We choose the signs as in \cite[(9.3)]{ADSS}.

We denote by $|R_{\bbT^\epsilon,\psi_0}^\pm|$  the inflations of $R_{\bbT^\epsilon,\psi_0}^\pm$ to $G_{x_0}$ and $G_{x_1}$.
The four compactly induced representations $\cInd_{G_{x_i}}^G |R_{\bbT^\epsilon,\psi_0}^\pm|$, $i=0,1$, are depth-zero supercuspidal  irreducible representations of $G$, they are called ``exceptional" in \cite{ADSS}, and are non-singular non-regular  in the terminology of  \cite{Ka3} (see also \cite{Au2}). We write
\begin{equation} \label{eqn:nonsingular}
\pi^+:=\cInd_{G_{x_0}}^G |R_{\bbT^\epsilon,\psi_0}^+|\quad\text{and}\quad
\pi^-:=\cInd_{G_{x_1}}^G |R_{\bbT^\epsilon,\psi_0}^-|.
\end{equation}
We define
 \begin{equation*}
 (\g \cdot \pi^+)(x):= \pi^+(\g^{-1}x\g)\quad\text{and}\quad
 \g \cdot x:= \g x\g^{-1},
 \end{equation*}
 so that 
 \[
 (\g \cdot \pi^+)(x) = \pi^+(\g^{-1} \cdot x).
 \]
 We set 
 \begin{equation} \label{eqn:packet}
\pi_1 = \pi^+, \quad \pi_2 = \pi^-,\quad \pi_3 = \g \cdot \pi^+, \quad \pi_4 = \g\cdot \pi^- .
\end{equation}

\section{Virtual characters far from the identity: the non-regular parameter}  We recall that  $G = \SL_2(F), \, \widehat{G} = \PGL_2(\C) = \PSL_2(\C)$.   
Let $E/F$ be the (unramified) quadratic extension of $F$ containing $\sqrt \varepsilon$ and let $K/F$ be the biquadratic extension of $F$ containing $\sqrt \varepsilon$ and $\sqrt \varpi$.    
For the Galois group  we have $\Gal(K/F) = \{1,\sigma, \tau, \sigma \tau\}$ with $\sigma \sqrt \varepsilon = - \sqrt \varepsilon$ and $\tau \sqrt \varpi = - \sqrt \varpi$.   

  The Langlands parameter 
 \begin{eqnarray}\label{biquad}
 \phi \colon \bW_F \to \PGL_2(\C) 
 \end{eqnarray}
  is now defined   by the condition that $\phi$ factors through $\Gal(K/F)$ and satisfies the equations 
 \begin{eqnarray}\label{pd1}
  \phi(\sigma) &=& s_2, \quad   \phi(\tau) = s_1.
  \end{eqnarray}  
  
We consider the associated supercuspidal $L$-packet  
\[
\Pi_\phi(\SL_2(F)) = \{\pi_1, \pi_2, \pi_3. \pi_4\},
\]
and let $\Theta_j := \Theta_{\pi_j}$ with $1 \leq j \leq 4$. 


\begin{thm}\label{FAR}  Let $\gamma$ be a regular semisimple element 
in the elliptic torus $T^\varepsilon$ which is far from the identity. Then we have the character formulas
 \begin{eqnarray*}
\Theta_1(\gamma)  &=&  - \psi_0(\gamma)\\
 \Theta_2(\gamma)  &=& -\psi_0(\gamma)\\
  \Theta_3(\gamma)  &=&  0\\
 \Theta_4(\gamma)  &=& 0.
 \end{eqnarray*}    
 \end{thm}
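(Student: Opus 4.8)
The plan is to compute each $\Theta_j(\gamma)$ directly from the character formula for compactly induced supercuspidal representations, exploiting the fact that a far-from-the-identity element of $T^\varepsilon$ is very rigid inside the Bruhat-Tits tree. Writing $\gamma=\matje{a}{b}{b\varepsilon}{a}$, recall from \S2 and Lemma \ref{farfrom} that $\gamma$ regular semisimple and far from the identity is equivalent to $\val(b)=0$ (equivalently $f(\gamma)=1$); I use this throughout.

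The first step is a lattice computation. Identify $F^{2}$ with $E=F(\sqrt\varepsilon)$ so that $\gamma$ acts by multiplication by $a+b\sqrt\varepsilon\in E^{1}$. Since $p>2$ and $E/F$ is unramified, $\sqrt\varepsilon\in\fo_{E}^{\times}$ and $\fo_{F}[\sqrt\varepsilon]=\fo_{E}$; as $\val(b)=0$ this gives $\fo_{F}[\gamma]=\fo_{F}[b\sqrt\varepsilon]=\fo_{F}[\sqrt\varepsilon]=\fo_{E}$. Hence every $\fo_{F}$-lattice in $E$ stable under $\gamma$ is an $\fo_{E}$-submodule of $E$, i.e.\ a fractional $\fo_{E}$-ideal, hence homothetic to $\fo_{E}$. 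Therefore $\gamma$ fixes exactly one vertex of the tree of $\SL_{2}(F)$, namely $x_{0}$. In particular: if $K$ is a maximal parahoric and $g\in G$ with $g^{-1}\gamma g\in K$, then $\gamma$ fixes $gK$'s vertex, forcing that vertex to be $x_{0}$, so $K=G_{x_{0}}$ and $g\in G_{x_{0}}$; and $g^{-1}\gamma g$ can never lie in a $G$-conjugate of $G_{x_{1}}$, since such a conjugate stabilises a vertex of the type opposite to $x_{0}$.

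Now apply the Harish-Chandra/Frobenius formula: for $\gamma$ elliptic regular and $K$ a maximal parahoric, $\Theta_{\cInd_{K}^{G}\sigma}(\gamma)=\sum_{gK:\;g^{-1}\gamma g\in K}\Theta_{\sigma}(g^{-1}\gamma g)$, a finite sum. By \eqref{eqn:nonsingular}--\eqref{eqn:packet}, two of the representations $\pi_{1},\dots,\pi_{4}$ are compactly induced from $G_{x_{0}}$, with inducing data the inflations $|R^{+}_{\bbT^\epsilon,\psi_{0}}|$ and $|R^{-}_{\bbT^\epsilon,\psi_{0}}|$, and the other two are compactly induced from $G_{x_{1}}$ via the $\mathbf g$-twist. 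For the two induced from $G_{x_{0}}$, the sum has a single term, the value of the inflated representation at $\gamma$, namely $R^{\pm}_{\bbT^\epsilon,\psi_{0}}(\overline\gamma)$, where $\overline\gamma\in\SL_{2}(\kF)=G_{x_{0}}/G_{x_{0},0+}$ is the reduction of $\gamma$; since $\val(b)=0$ this reduction lies in $\bbT^\epsilon(\kF)$ and is regular there. For the two induced from $G_{x_{1}}$, the sum is empty by the previous paragraph, so their characters vanish at $\gamma$ — this yields the two zero values.

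It remains to evaluate $R^{\pm}_{\bbT^\epsilon,\psi_{0}}(\overline\gamma)$. From $R^{\SL_{2}}_{\bbT^\epsilon,\psi_{0}}=-R^{+}_{\bbT^\epsilon,\psi_{0}}-R^{-}_{\bbT^\epsilon,\psi_{0}}$ and the Deligne-Lusztig character formula at the regular element $\overline\gamma$ of the anisotropic torus $\bbT^\epsilon(\kF)$, whose Frobenius conjugate is $\overline\gamma^{-1}$, one gets $R^{\SL_{2}}_{\bbT^\epsilon,\psi_{0}}(\overline\gamma)=\psi_{0}(\overline\gamma)+\psi_{0}(\overline\gamma^{-1})=2\psi_{0}(\overline\gamma)$, using $\psi_{0}=\psi_{0}^{-1}$. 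Combined with $R^{+}_{\bbT^\epsilon,\psi_{0}}(\overline\gamma)=R^{-}_{\bbT^\epsilon,\psi_{0}}(\overline\gamma)$ — the two cuspidal constituents of dimension $\tfrac{q-1}{2}$ are exchanged by conjugation by an element of $\GL_{2}(\kF)$ of non-square determinant, which carries $\overline\gamma$ to an $\SL_{2}(\kF)$-conjugate of itself — this gives $R^{\pm}_{\bbT^\epsilon,\psi_{0}}(\overline\gamma)=-\psi_{0}(\overline\gamma)$. Since the unique quadratic character $\psi_{0}$ of $T^\varepsilon$ has depth zero it factors through $T^\varepsilon/T^\varepsilon_{1}\cong\bbT^\epsilon(\kF)$, so $\psi_{0}(\overline\gamma)=\psi_{0}(\gamma)$, and the values $-\psi_{0}(\gamma),\,-\psi_{0}(\gamma),\,0,\,0$ follow, provided one checks by tracking the $\mathbf g$-twist that the two $G_{x_{0}}$-induced members are $\pi_{1},\pi_{2}$. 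I expect this last bookkeeping — deciding which of $\pi_{1},\dots,\pi_{4}$ is induced from which parahoric and carries which of $R^{\pm}_{\bbT^\epsilon,\psi_{0}}$ after applying $\mathbf g$ — to be the main obstacle, since it hinges on the two identifications of the reductive quotients of $G_{x_{0}}$ and $G_{x_{1}}$ with $\SL_{2}(\kF)$ and on the sign choice of \cite[(9.3)]{ADSS}; the lattice argument and the Deligne-Lusztig evaluation (including the standard equality of the two halves on the regular elements of $\bbT^\epsilon(\kF)$) are routine by comparison.
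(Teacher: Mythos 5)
Your argument is correct and genuinely different in method from the paper's. The paper proves Theorem \ref{FAR} by direct citation: it reads the values $\Theta_{\pi^\pm}(\gamma)=-\psi_0(\gamma)$ off equations $(*)$ and $(**)$ on p.\,65 of \cite{ADSS}, and gets $\Theta_3=\Theta_4=0$ from the vanishing of $\Theta_{\pi^\pm}$ on $T^{\varepsilon,\varpi}$ (\cite[Theorem 5.1]{ADSS}) together with $\Theta_{\g\cdot\pi^\pm}(\gamma)=\Theta_{\pi^\pm}(\g^{-1}\cdot\gamma)$. You instead give a self-contained derivation: the lattice computation showing $\fo_F[\gamma]=\fo_E$ when $\val(b)=0$ (so the unique $\gamma$-stable homothety class is $x_0$), the Harish--Chandra/Frobenius character formula for $\cInd_K^G$ reducing the sum to a single term for $K=G_{x_0}$ and an empty sum for parahorics of the other type, and the Deligne--Lusztig evaluation $R^{\SL_2}_{\bbT^\epsilon,\psi_0}(\bar\gamma)=\psi_0(\bar\gamma)+\psi_0(\bar\gamma^{-1})=2\psi_0(\bar\gamma)$ together with $R^+(\bar\gamma)=R^-(\bar\gamma)$ on regular elements of the anisotropic torus (which holds because for such elements the $\GL_2(\kF)$- and $\SL_2(\kF)$-conjugacy classes coincide). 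This essentially reproves the relevant part of \cite{ADSS} rather than invoking it, which makes the theorem transparent but longer.

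One thing worth flagging: your proof implicitly uses, and the truth of Theorem \ref{FAR} requires, that \emph{both} $\pi^+$ and $\pi^-$ are compactly induced from $G_{x_0}$ (with inducing data $|R^+_{\bbT^\epsilon,\psi_0}|$ and $|R^-_{\bbT^\epsilon,\psi_0}|$ respectively), while $\pi_3=\g\cdot\pi^+$, $\pi_4=\g\cdot\pi^-$ are the ones induced from $G_{x_1}$. Equation \eqref{eqn:nonsingular} as printed says $\pi^-=\cInd_{G_{x_1}}^G|R^-_{\bbT^\epsilon,\psi_0}|$; taken literally this would put $\pi_1$ and $\pi_4$ on $G_{x_0}$ and $\pi_2,\pi_3$ on $G_{x_1}$, and then your own lattice argument (correctly) forces $\Theta_2(\gamma)=0$ and $\Theta_4(\gamma)=-\psi_0(\gamma)$, contradicting the statement. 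So \eqref{eqn:nonsingular} appears to contain a typo ($G_{x_1}$ should be $G_{x_0}$), consistent with the parallel convention in \cite{ADSS} and with the paper's own invocation of \cite[Theorem 5.1]{ADSS} for the vanishing of $\Theta_{\pi^-}$ on $T^{\varepsilon,\varpi}$. You silently work with the corrected definition; that is the right thing to do, but it should be said explicitly, since you identified this bookkeeping as the main residual risk. Once that is fixed, the remaining pieces of your proof (the identity $R^+(\bar\gamma)=R^-(\bar\gamma)$ via conjugation by $\mathrm{diag}(1,\delta)$ with $\delta$ a nonsquare in $\kF^\times$, and the depth-zero descent $\psi_0(\gamma)=\psi_0(\bar\gamma)$) are sound.
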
 
 \begin{proof}   We refer to equations $(*)$ and $(**)$ on p.65 of \cite{ADSS}.  It follows immediately that 
  \[
 \Theta_{\pi^+}(\gamma) =  \Theta_{\pi^-}(\gamma)  = - \psi_0(\gamma).
 \]
 We have, for all $\gamma \in T^\varepsilon$, 
 \begin{eqnarray*}
 \Theta_{\pi_3}(\gamma) &=& \Theta_{\g \cdot \pi^+}(\gamma)\\
 &=& \Theta_{\pi^+}(\g^{-1}\cdot \gamma)\\
 &=& 0
 \end{eqnarray*}
 since $\Theta_{\pi^+}$ vanishes on $\g^{-1} \cdot T^\varepsilon$ by \cite[Theorem 15.1]{ADSS}.   
 
Similarly, we have, for all $\gamma \in T^\varepsilon$, 
\begin{eqnarray*}
 \Theta_{\pi_4}(\gamma) &=& \Theta_{\g \cdot \pi^-}(\gamma)\\
 &=& \Theta_{\pi^-}(\g^{-1}\cdot \gamma)\\
 &=& 0
 \end{eqnarray*}
 since $\Theta_{\pi^-}$ vanishes on $\g^{-1} \cdot T^{\varepsilon}$ by \cite[Theorem 15.1]{ADSS}.
 \end{proof}
 
 Consider the $L$-packet of $\SL_2(F)$ given by
\begin{equation}
\Pi_{\phi}(\SL_2(F)) = \{\pi_1, \pi_2, \pi_3, \pi_4\}
\end{equation}
where the representations $\pi_i$ are depth-zero supercuspidal irreducible representations of $G$ (see \eqref{eqn:packet}).
We  suppose that $\pi_1$ is $\fw$-generic.   

Let  $\Theta_1$,  $\Theta_2$, $\Theta_3$, $\Theta_4$ be the distribution characters of $\pi_1$, $\pi_2$, $\pi_3$, $\pi_4$. 

Given $A \in \GL_2(\C)$ let $A_*$ denote the image of $A$ in $\PGL_2(\C)$.   Let
       \begin{equation} \label{eqn:si}
       s_1 = \left(
       \begin{array}{cc}
       1 & 0\\
       0 & -1
       \end{array}
       \right)_*
             \quad s_2 = \left(\begin{array}{cc}
       0 & 1\\-1 & 0
       \end{array}
       \right)_*
       \quad 
       s_3 = \left(
       \begin{array}{cc}
       0 & 1\\
       1 & 0
       \end{array}
       \right)_*.
       \end{equation}  
 We have
       \[
       S_\phi = \{1, s_1, s_2, s_1s_2 \}.
       \]
We will enumerate the characters of $S_\phi$ in the following way:
       \[
       \begin{tabular}{c|cccc}
       & $1$ & $s_1$ & $s_2$ & $s_1s_2$\\
       \hline
       $\rho_1$ & $1$ & $1$ & $1$ & $1$\\
       $\rho_2$ & $1$ & $1$ & $-1$ & $-1$\\
       $\rho_3$ & $1$ & $-1$ & $1$ & $-1$\\
       $\rho_4$ & $1$ & $-1$ & $-1$ & $1$
       \end{tabular}
       \]

Given 
\begin{equation} 
\iota \colon\Pi_\phi \to \Irr(S_\phi)
\end{equation}
 we attach to $(\phi,s)$ the  virtual character    
\begin{equation} \label{eqn:virtual}                                                       
\Theta_{\phi, s} := \sum_{\pi\in\Pi_\phi(\SL_2(F)) } \iota(\pi)(s) \cdot \Theta_{\pi}.
\end{equation}
   
The first constraint is that $\iota$ must assign the trivial character 
$\rho_1$ to the $\fw$-generic representation $\pi_1$.   The second important constraint is the endoscopic character identity.  
 These constraints determine the map $\iota$ uniquely.  The defining equation is
\begin{equation} 
\iota(\pi_j) : = \rho_j \quad \text{ with $j = 1,2,3,4$.}
\end{equation}
In that case, we have
\begin{equation} \label{eqn:endos}
\Theta_{\phi, s} =\begin{cases} \Theta_1 + \Theta_2 + \Theta_3 + \Theta_4 &\text{if $s = 1$}\\
 \Theta_1 + \Theta_2 - \Theta_3 - \Theta_4   &\text{if $ s = s_1$}\\
\Theta_1 - \Theta_2 + \Theta_3 - \Theta_4.   &\text{if $s = s_2$}\\
\Theta_1 - \Theta_2 - \Theta_3 + \Theta_4    &\text{if $s = s_3$}
\end{cases}.
\end{equation}

 \begin{thm}\label{FAR-NON-REG}  Let $\gamma$ be a regular semisimple element 
in the elliptic torus $T^\varepsilon$ which is far from the identity. Then we have
  \begin{eqnarray*}
\Theta_{\phi,1\hphantom{s}}(\gamma)  &=&  -2 \psi_0(\gamma)\\
 \Theta_{\phi, s_1}(\gamma)  &=& -2\psi_0(\gamma)\\
  \Theta_{\phi, s_2}(\gamma)  &=&  0\\
 \Theta_{\phi, s_3}(\gamma)  &=& 0.
 \end{eqnarray*}    
 \end{thm}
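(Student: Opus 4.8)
The plan is to combine the character formulas of Theorem \ref{FAR} with the explicit expressions for $\Theta_{\phi,s}$ in \eqref{eqn:endos}. By Theorem \ref{FAR}, for $\gamma$ a regular semisimple element of $T^\varepsilon$ far from the identity we have $\Theta_1(\gamma) = \Theta_2(\gamma) = -\psi_0(\gamma)$ and $\Theta_3(\gamma) = \Theta_4(\gamma) = 0$. First I would simply substitute these four values into each of the four cases of \eqref{eqn:endos}. For $s=1$ this gives $\Theta_1+\Theta_2+\Theta_3+\Theta_4 = -\psi_0(\gamma)-\psi_0(\gamma)+0+0 = -2\psi_0(\gamma)$; for $s=s_1$ the signs on $\Theta_3,\Theta_4$ flip but those terms are zero, so again $-2\psi_0(\gamma)$; for $s=s_2$ we get $\Theta_1-\Theta_2+\Theta_3-\Theta_4 = -\psi_0(\gamma)+\psi_0(\gamma)+0-0 = 0$; and for $s=s_3$ we get $\Theta_1-\Theta_2-\Theta_3+\Theta_4 = -\psi_0(\gamma)+\psi_0(\gamma)-0+0 = 0$. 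This yields exactly the four asserted identities.

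The proof is therefore essentially a bookkeeping computation once Theorem \ref{FAR} is in hand, so there is no serious obstacle; the one point that deserves a word of care is the identification of the relevant map $\iota\colon\Pi_\phi\to\Irr(S_\phi)$ and hence of the virtual characters $\Theta_{\phi,s}$ themselves. Here I would invoke the discussion preceding the theorem: the normalization $\pi_1$ is $\fw$-generic, the assignment $\iota(\pi_j)=\rho_j$, and the resulting formulas \eqref{eqn:endos}. Since Theorem \ref{FAR-NON-REG} is stated with respect to precisely this $\Theta_{\phi,s}$, nothing further needs to be justified at this stage — the verification that this $\iota$ is the correct one (i.e. the one satisfying the endoscopic character identity) is deferred to the computation of the right-hand side of \eqref{endo}, which is carried out later.

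I would write the proof as a short paragraph: recall the values from Theorem \ref{FAR}, recall \eqref{eqn:endos}, and then display the four substitutions. Concretely:
\begin{proof}
By Theorem \ref{FAR}, for $\gamma$ regular semisimple in $T^\varepsilon$ and far from the identity we have $\Theta_1(\gamma) = \Theta_2(\gamma) = -\psi_0(\gamma)$ and $\Theta_3(\gamma) = \Theta_4(\gamma) = 0$. Substituting these into \eqref{eqn:endos} gives
\[
\Theta_{\phi,1}(\gamma) = \Theta_1(\gamma) + \Theta_2(\gamma) + \Theta_3(\gamma) + \Theta_4(\gamma) = -2\psi_0(\gamma),
\]
\[
\Theta_{\phi,s_1}(\gamma) = \Theta_1(\gamma) + \Theta_2(\gamma) - \Theta_3(\gamma) - \Theta_4(\gamma) = -2\psi_0(\gamma),
\]
\[
\Theta_{\phi,s_2}(\gamma) = \Theta_1(\gamma) - \Theta_2(\gamma) + \Theta_3(\gamma) - \Theta_4(\gamma) = 0,
\]
\[
\Theta_{\phi,s_3}(\gamma) = \Theta_1(\gamma) - \Theta_2(\gamma) - \Theta_3(\gamma) + \Theta_4(\gamma) = 0,
\]
as claimed.
\end{proof}
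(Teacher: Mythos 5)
Your proof is correct and matches the paper's approach exactly: the paper's proof of Theorem~\ref{FAR-NON-REG} consists of the single line ``This follows readily from Theorem~\ref{FAR},'' and you have simply spelled out the substitution of the values from Theorem~\ref{FAR} into the four cases of~\eqref{eqn:endos}.
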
 
 \begin{proof}   This follows readily from Theorem \ref{FAR}.   
 \end{proof}

   \section{Virtual characters near the identity: the non-regular parameter}  
 
 The sign character $\sgn_\varepsilon \colon F^\times \to \{\pm 1\}$ is determined by the short exact sequence
  \[
  1 \to \rN_{E/F}E^\times \to F^\times \to \Gal(E/F) \to 1.
  \]
  The notation $\mathrm{Im}_\varepsilon$ has already been defined in \S3.1.        
  We recall the definition of $f(\gamma)$ from \eqref{eqn:fgamma}.

 \begin{thm}\label{NEAR-NON-REG}  Let $\gamma$ be a regular semisimple element 
in the elliptic torus $T^\varepsilon$ which is near the identity.  Then we  have 
 \begin{eqnarray*}
 \Theta_{\phi, 1}(\gamma) &=& - 2\\
 \Theta_{\phi, s_1}(\gamma) &=& -2f(\gamma)\\
 \Theta_{\phi, s_2}(\gamma) &=& 0\\
 \Theta_{\phi, s_3}(\gamma) & = & 0
 \end{eqnarray*}
 \end{thm}
 \begin{proof}  According to Theorem 15.2 in \cite{ADSS}, we have \footnote{we have made a surely necessary correction to \cite[Theorem 15.2]{ADSS}, deleting the $\pm$ inside the curly bracket} 
 \[
 \Theta_{\pi^\pm}(\gamma) = \frac{1}{2} \left\{H(\Lambda', F_{\theta'}) \frac{\sgn_{\theta'}(\eta^{-1} \mathrm{Im}_{\theta'}(\gamma))}{|D_G(\gamma)|^{1/2}} - 1\right\}
 \]  
 
With $\gamma \in T^\varepsilon, \theta' = \varepsilon, \eta = 1$ we have
 \begin{eqnarray*}
 \Theta_1(\gamma) & = & \Theta_2(\gamma) \\
 & = & ( - f(\gamma) - 1)/2
 \end{eqnarray*}
 
 With $\gamma \in T^\varepsilon, \g \cdot \gamma \in T^{\varepsilon, \varpi}, \theta' = \varepsilon, \eta = \varpi$ we have
 \begin{eqnarray*}
 \Theta_4(\gamma) & = & \Theta_3(\gamma) \\
 &=&  \Theta_1(\g \cdot \gamma)\\ 
 &=& (f(\gamma) - 1)/2
 \end{eqnarray*}
 and the result follows immediately.
 \end{proof}

\section{Virtual characters: regular parameters}  We recall from \eqref{eqn:reg} the definition of  the depth-zero supercuspidal representation
\[
\pi^+:= \pi(T^\varepsilon, \psi).
\]
The $L$-packet is then 
\[
\Pi_\phi = \{\pi^+, \pi^-\}
\]
with $\pi^- = \g \cdot \pi^+$.

\begin{thm}\label{vch}  Let $s$ be the non-trivial element in $S_\phi$.   Let $\gamma$ be a regular semisimple element 
in the elliptic torus $T^\varepsilon$. Then we have
\begin{eqnarray*}
\Theta_{\phi, s} (\gamma) & = &
\begin{cases}
 -\psi(\gamma) - \psi(1/\gamma) \quad  \text{when $\gamma$ is far from the identity} \\
-2f(\gamma) \quad \text{when $\gamma$ is near the identity} 
\end{cases}
\end{eqnarray*}
\end{thm}
\begin{proof}   First case: $\gamma$ is far from the identity.    The equation 
\[
\Theta_{\pi^+}(\gamma) = -\psi(\gamma) - \psi(1/\gamma)
\]
 is a re-statement of Theorem 14.14 in \cite{ADSS}.   We have simplified this statement  by using Lemma \ref{farfrom} and \cite[Lemma 4.2]{ADSS}.   
 According to \cite[Theorem 14.14]{ADSS} we have
 \[
 \Theta_{\pi^-}(\gamma) = 0.
 \]
 
 We recall that $S_\phi = \{1,s\} = \Z /2\Z$.  Denote the trivial character of $S_\phi$ by $\rho_0$ and the non-trivial 
character by $\rho_1$.   Assume that $\pi$ is 
$\fw$-generic so that the parametrization of the $L$-packet is 
\[
\iota \colon \Pi_\phi \to \Irr(S_\phi), \quad \quad \pi^+ \mapsto \rho_0, \quad \pi^- \mapsto \rho_1.
\]
 
 In that case, we have
\begin{eqnarray*}                                                       
\Theta_{\phi, s}(\gamma) &=& \sum_{\pi\in\Pi_\phi(\SL_2(F)) } \trace(\iota(\pi))(s) \cdot \Theta_{\pi}\\
&=& \rho_0(s) \Theta_\pi(\gamma) + \rho_1(s)\Theta_{\pi^-}(\gamma)\\
&=& \Theta_{\pi^+}(\gamma)\\
&=& -\psi(\gamma) - \psi(1/\gamma).
\end{eqnarray*}

Second case: $\gamma$ is near the identity.    In the terminology of \cite[Definition 14.1]{ADSS}, we are in the \emph{unramified case}.  According to \cite[Theorem 14.20]{ADSS} we have
 \[
 \Theta_{\pi^+}(\gamma) = c_0(\pi) - f(\gamma)
 \]
 where, by \cite[Definition 14.17]{ADSS}, the constant term $c_0(\pi)$ is 
 \[c_0(\pi) = -1
 \]
 and so
\[
\Theta_{\pi^+}( \gamma) = -1 - f(\gamma).
\]

Note that $\gamma \in T^\varepsilon \implies \g \cdot \gamma \in T^{\varepsilon, \varpi}$.   
With $\gamma \in T_1^{\varepsilon}$ we have
\begin{eqnarray*}
\Theta_{\pi^+}(\g \cdot \gamma) &=& c_0(\pi) - \frac{\sgn_{\varepsilon}(\varpi^{-1} \mathrm{Im}_\varepsilon(
\g \cdot \gamma))}{D^G(\g \cdot \gamma)}
\end{eqnarray*}
by Theorem 14.20 in \cite{ADSS}.   Using \ref{SGN}, we obtain
\[
\Theta_{\pi^+}(\g \cdot \gamma) = c_0(\pi) + f(\gamma)
\]
With $\gamma \in T^\varepsilon$ we also have
\begin{eqnarray*}
\Theta_{\pi^-}(\gamma)
&=& \Theta_{\pi^+}(\g \cdot \gamma)
\end{eqnarray*}
and so
\[
\Theta_{\pi^-}(\gamma) = -1 + f(\gamma).
\]

The virtual character is given by 
\begin{eqnarray*}
\Theta_{\phi, s}(\gamma) &=& \rho_0(s)\Theta_{\pi^+} (\gamma) + \rho_1(s)\Theta_{\pi^-}(\gamma) \\
&=& \Theta_{\pi^+}(\gamma) - \Theta_{\pi^-}(\gamma)\\
&=& -1 - f(\gamma) - (-1 + f(\gamma))\\
&=& -2f(\gamma) .
\end{eqnarray*}
\end{proof}

  \section{Endoscopy: the non-regular parameter}  We recall from \S4 the definition of the non-regular parameter $\phi$.   The defining equations are
  \[
  \phi(\sigma) = s_2, \quad \quad \phi(\tau) = s_1.
  \]
  
  We have     
   \begin{thm}  Let $E = F(\sqrt \varepsilon)$ and let $H$ denote the norm $1$ subgroup of $E^\times$.   Let $\psi_0$ denote the quadratic character of $H$.   The non-regular parameter  $\phi$  factrorizes as follows:
     \[
  \begin{CD}
 \phi :  \bW_F @> \phi^H >> \widehat{H}  \rtimes \bW_F    @> {}^L\eta >> \widehat{G} \times \W_F
  \end{CD}
  \]
and the stable character is given by     
     \[
     \mathscr{S}\Theta_{\phi^H} = \psi_0.
     \]
      \end{thm}
      
      \begin{proof} We consider the identity component  of the centralizer of $s_1$ in $\widehat{G}$.   This is a 
      complex torus of dimension $1$.    Following \cite[\S6.2]{T} we will set 
      $\widehat{H} = \mathrm{Cent}(s_1, \widehat{G})^0$ and consider $\mathcal{H} = \widehat{H} \cdot \phi(\bW_F)$ and the inclusion
      \[
    {}^L\eta :  \mathcal{H} \to {}^LG.
      \]    
  Note that 
  \[
  \widehat{H}\cdot \phi(\bW_F) = \widehat{H} \cdot \phi(\sigma)
  \]
  since $\phi(\tau) \in \widehat{H}$.   The maximal compact subgroup of $\PGL_2(\C)$ is $\SO_3(\R)$ and $s_1, s_2$ 
  may be viewed as rotations of order $2$ about orthogonal axes.   Then any $z \in \widehat{H}$ is a  rotation about the axis of $s_1$ and the conjugate of $z$ by the rotation $s_2$ is precisely the inverse of the rotation $z$.   
  
  
 So  $\phi(\sigma)$ acts on $\widehat{H}$ as inversion and this  determines an action of $\Gal(E/F)$ by inversion on $\widehat{H}$.   
  Let $H$ be the quasi-split group
defined over $F$ that is dual to $\widehat{H}$  and whose rational structure is determined
by $\Gal(E/F) \to  \mathrm{Out}(H)$ as above.  
The  group of norm $1$ elements in $E$ admits a canonical action of $\Gal(E/F)$ by inversion and so qualifies as the endoscopic group $H$.    

The parameter $\phi$ factors through ${}^LH$:
    \[
  \begin{CD}
 \phi :  \bW_F @> \phi^H >> \widehat{H}  \rtimes \bW_F    @> {}^L\eta >> \widehat{G} \times \W_F.
  \end{CD}
  \]
  
  It is worth noting that the centralizer of $s_1$ in $\widehat{G}$ is precisely ${}^LH$.   

The maps $\phi^H$ and ${}^L\eta$ are given explicitly by
\[
\phi^H(\sigma) = 1 \rtimes \sigma, \quad \quad \phi^H(\tau) = s_1 \rtimes \tau
\]
and
\[
{}^L\eta (1 \rtimes \sigma) = s_2  \times \sigma, \quad {}^L\eta(s_1 \rtimes \tau) = s_1 \times \tau
\]

  We now have the extended endoscopic triple
  \[
  \mathfrak{e}_1: = ({}^LH, s_1, {}^L\eta)
  \]

  
 

  
In the notation of \cite[\S4,4]{FKS} we have $(S, \theta) = (H, \psi_0)$.    Then $\phi^H$ is the  $L$-parameter of the quadratic character $\psi_0$.   Therefore, we  have
\[
\Theta_{\phi^H} = \psi_0
\]
and, for the stable character,
\begin{equation}\label{STheta}
\mathscr{S}\Theta_{\phi^H} = \psi_0.
\end{equation}
\end{proof}
 
 
  \begin{lem}\label{Delta3} Let $\gamma$ be a regular semisimple element in $T^\varepsilon$.    Then we have
  \begin{eqnarray*}
   \Delta[\fw, \fe_1](\gamma, \gamma) & =&  - f(\gamma)\\
\delta \in H_2 \implies   \Delta[\fw, \fe_2](\delta, \gamma)  & =& 0\\
\delta \in H_3 \implies \Delta[\fw, \fe_3](\delta, \gamma)   &=& 0
  \end{eqnarray*}
  with $\fe_i = \fe(s_i)$ for $i = 1.2.3$.
  
 \end{lem}
 \begin{proof}   Underlying the map $\widehat{\eta} \colon \widehat{H} \to \widehat{G}$ we have  a canonical pair of isomorphisms of $F$-tori:
   \begin{eqnarray*}
  T^\varepsilon & \to & H\\
  \left(\begin{array}{cc} 
   a & b\\
 \varepsilon b & a
 \end{array}
 \right)
   & \mapsto &  a + b \sqrt{\varepsilon}
   \end{eqnarray*}
   and
    \begin{eqnarray*}
     \left(\begin{array}{cc} 
   a & b\\
 \varepsilon b & a
 \end{array}
 \right)
   & \mapsto &  a - b \sqrt{\varepsilon}.
   \end{eqnarray*}
   These isomorphisms are admissible in the sense of \cite[\S3]{Ka2}.     
   We therefore have two related pairs: 
  \begin{eqnarray}\label{RP}
    (\gamma, \gamma)\quad \textrm{and} \quad (1/\gamma, \gamma). 
  \end{eqnarray}    
 
 We will now quote the Kaletha formula for the absolute transfer factor \cite[Example 3.6.9]{Ka4}.  In our notation, this is
 \begin{equation}\label{Deltadef}
  \epsilon(1/2, \C, \psi) \cdot \kappa_{E/F}\left(\frac{c - \overline{c}}{2\eta}\right) |c - \overline{c}|_F .
 \end{equation}
 We recall that
 \[
 D_G(\gamma) = |c - \overline{c}|_F = |b|.
 \]
 In order to match the normalization of transfer factors in the recent article \cite{AK}, we switch the position of the Weyl discriminant $|c - \overline{c}|_F$ from numerator to denominator.  
The  transfer factor defined in \cite{AK}, for which there is a clean formulation of the endoscopic character identity, is the transfer factor in \cite[p.163]{Ka1} divided by $D_G(\gamma)^2$.
 This leads to the equation
 \begin{equation}\label{Deltadef}
\Delta[\fw, \fe_1](c,c) =  \epsilon(1/2, \C, \psi) \cdot \kappa_{E/F}\left(\frac{c - \overline{c}}{2\eta}\right) \cdot |c - \overline{c}|_F^{-1}.
 \end{equation}
 It follows that
\begin{eqnarray*}
\Delta[\fw, \fe_1](c,c) 
&=& \epsilon(1/2, \C, \psi) \cdot \sgn_\varepsilon(b) \cdot |b|^{-1}.
\end{eqnarray*}

 Local class field theory allows us to replace the $\bW_F$-module $\C$ with the sign character 
 $ \sgn_\varepsilon$.  Denote this character by $\chi$.   We infer that
 \[
 \epsilon(1/2, \C, \psi) = \epsilon(\chi, 1/2, \psi)
 \]
 the Tate local constant of $\chi$ as in \cite[\S23.4]{BH}.   Note that ``$\Lambda$ has depth zero" in \cite{ADSS} is the same as 
 ``$\psi$ has level one"  in \cite{BH}.   We now apply the Proposition in \cite[\S23.5]{BH} and we obtain
 \begin{eqnarray*}
 \epsilon(1/2, \C, \psi) &=& \epsilon(\chi, 1/2, \psi)\\
 &=& \chi(\varpi)^{-1}\\
 &=& -1.
 \end{eqnarray*}
Putting all this together, we conclude that 
\begin{equation}\label{Delta7}
\Delta[\fw, \fe_1](\gamma , \gamma) = - f(\gamma).
\end{equation}

According to (\ref{fGalois}), $f$   is invariant under the action of $\Gal(E/F)$, from which we infer that 
\begin{equation*}\label{DeltaFormula}
\Delta[\fw, \fe_1](1/\gamma, \gamma) = - f(\gamma).
\end{equation*}

Set $\widehat{H_2} = \Cent(s_2, \widehat{G})^0$.   Then 
 \[
  \widehat{H_2}\cdot \phi(\bW_F) = \widehat{H_2} \cdot \phi(\tau)
  \]
 and $\phi(\tau)$ acts on $\widehat{H_2}$ by inversion.    Therefore,  $H_2$ is secured as the group of norm $1$ elements in $E_2 = F(\sqrt \varpi)$.   
 We have the extended endoscopic triple
 \[
 \fe_2 = (s_2, H_2, {}^L\eta_2).
 \]

Set $\widehat{H_3} = \Cent(s_3, \widehat{G})^0$.   Then 
 \[
  \widehat{H_3}\cdot \phi(\bW_F) = \widehat{H_3} \cdot \phi(\sigma) \phi(\tau)
  \]
 and $\phi(\sigma)$, $\phi(\tau)$ separately act on $\widehat{H_3}$ by inversion.    Therefore,  $H_3$ is secured as the group of norm $1$ elements in $E_3 = F(\sqrt {\varepsilon \varpi})$.   
 We  have the extended endoscopic triple
 \[
 \fe_3 = (s_3, \H_3, {}^L\eta_3).
 \]

At this point, write $H = H_1$ and $E = E_1$.   Note that $H_1$ (resp. $H_2, H_3$) is the endoscopic group $U(1)$ split over $E_1$ (resp. $E_2, E_3$), as in \cite[\S11.1.5]{CFM+}.

There is no admissible isomorphism from $T^\varepsilon$ to $H_2$.   If $\delta \in H_2$ and $\gamma \in T^\varepsilon$ then $\gamma $ and $\delta$ are not related and so 
\[
\Delta[\fw, \fe_2](\delta, \gamma) = 0.   
\]
Similarly, there is no admissible isomorphism from $T^\varepsilon$ to $H_3$.   If $\delta \in H_3$ and $\gamma \in T^\varepsilon$ then $\gamma$ and $\delta$ are not related and so 
\[
\Delta[\fw, \fe_3](\delta, \gamma) = 0.   
\]
\end{proof}

\begin{thm}\label{abc}  Let $\phi$ be the non-regular supercuspidal Langlands parameter for $\SL_2$.   If $\gamma \in T^\varepsilon$ then  we have  
\begin{eqnarray*}
\mathscr{E}(s_1:\gamma) &=& -2f(\gamma) \psi_0(\gamma)\\
\mathscr{E}(s_2:\gamma) &=& 0\\
\mathscr{E}(s_3:\gamma) &=& 0.
 \end{eqnarray*}
 \end{thm}
 \begin{proof}   We have
   \begin{eqnarray*}
   \mathscr{E}(s_1: \gamma) &=& \sum_{\delta \in H(F)} \Delta[\fw, \fe_1](\delta, \gamma) \, \mathscr{S}\Theta_{\phi^H} (\delta)\\
    &=& \sum_{\delta \in H(F)} \Delta[\fw, \fe_1](\delta, \gamma)
 \psi_0(\delta) \quad \textrm{by} \quad (\ref{STheta})\\ 
 &=&  \Delta[\fw, \fe_1](\gamma, \gamma)  \psi_0(\delta) 
  + \Delta([\fw, \fe_1](1/ \gamma, \gamma)) \psi_0( 1/\gamma)\\
  &=& - f(\gamma) \psi_0(\gamma) - f(\gamma) \psi_0(1/\gamma) \quad \textrm{by Lemma} \:\, \ref{Delta3}\\
  &=& -2 f(\gamma) \psi_0(\gamma) \quad \textrm{since} \quad \psi_0 = \psi_0^{-1}.
    \end{eqnarray*} 
    
    We also have
     \begin{eqnarray*}
   \mathscr{E}(s_2: \gamma) &=& \sum_{\delta \in H(F)} \Delta[\fw, \fe_2](\delta, \gamma) \, \mathscr{S}\Theta_{\phi^H} (\delta)\\
    &=& 0 \quad \textrm{by Lemma} \:\, \ref{Delta3} 
        \end{eqnarray*} 
 and       
          \begin{eqnarray*}
   \mathscr{E}(s_3: \gamma) &=& \sum_{\delta \in H(F)} \Delta[\fw, \fe_3](\delta, \gamma) \, \mathscr{S}\Theta_{\phi^H} (\delta)\\
    &=& 0 \quad \textrm{by Lemma} \:\, \ref{Delta3}
            \end{eqnarray*} 
  \end{proof}

 Far from the identity, the right-hand-side of Theorem \ref{abc} is  
\[
 - 2\psi_0(\gamma) 
\]
since $f(\gamma) = 1$ by Lemma \ref{farfrom}.   Near the identity, the right-hand-side of Theorem \ref{abc} is 
\[
 - 2f(\gamma)
\]
since $\gamma$ and $1/\gamma$ are both near the identity and $\psi_0$ has depth zero.   This mirrors the calculations in \cite{ADSS}: in that paper, the character formulas are  done separately, according as $\gamma$ is far from the identity or near the identity.   

\section{Endoscopy: regular parameters}  
 We recall the elements $s_1$, $s_2$ and $s_3$ in $\PGL_2(\C)$ defined in \eqref{eqn:si}.
Recall the unramified quadratic extension  $E/F$ and  
let $\sigma$ be the generator of $\Gal(E/F)$.  Let $\xi$ be a character of $E^\times$ such that 
\[
\xi \neq \xi^\sigma \quad \textrm{and} \quad \psi = \xi|_{E^1}.
\] 
Then $(E/F, \xi)$ is an admissible pair in the sense of \cite[\S18]{BH}.   Let 
\[
\rho_\xi = \Ind_E^F \, \xi
\]
be the induced $\W_F$-module.   This is a $2$-dimensional imprimitive $\W_F$-module.    We describe this module explicitly.

We note that $\W_F/\W_E \simeq \Gal(E/F)$.   Coset representatives are $\{1, \sigma\}$.   

As a vector space, the induced module is $\C \oplus \C$ with module structure as follows:
\begin{itemize}
\item $w \in \W_E$ acts on the first summand $\C$ as $\xi(w)$ 
\item $w \in \W_E$ acts on the second summand $C$ as $\xi^\sigma(w)$
\item $\sigma$ sends $(z_1, z_2)$ to $(z_2, z_1)$.
\end{itemize}
This $\W_F$-module determines a homomorphism
\begin{eqnarray*}
\rho_\xi \colon\W_F &\to& \GL_2(\C)\\
w &\mapsto& \left(\begin{array}{cc} \xi(w) & 0 \\
0 & \xi^\sigma(w)
\end{array}
\right)\quad \textrm{if} \quad w \in \W_E\\
\sigma &\mapsto& \left(\begin{array}{cc}
0 & 1\\
1 & 0
\end{array}
\right).
\end{eqnarray*}

Then  $\phi$  is  the composition
\begin{equation}\label{CD2}
\begin{CD}
\phi \colon W_F @>\rho_\xi >> \GL_2(\C) @ >>> \PGL_2(\C).
\end{CD}
\end{equation}  
The centralizer of $\im \, \phi$ in $\PGL_2(\C)$ is $\{1, s_1\}$ and so we have
\[
S_\phi = \Z/2\Z.
\]


We consider the representations $\pi:=\pi^+=\pi(\bbT^\epsilon,\psi)$ and $\pi^-$ defined in \eqref{eqn:reg}.

\begin{lem}We have
\[
\Pi_\phi (\SL_2(F))= \{\pi, \pi^-\}.
\]
\end{lem}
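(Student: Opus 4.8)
\emph{Plan of proof.} The idea is to realise $\Pi_\phi(\SL_2(F))$ as the set of irreducible constituents of the restriction to $\SL_2(F)$ of the supercuspidal representation of $\GL_2(F)$ attached by the local Langlands correspondence to the $2$-dimensional lift $\rho_\xi$ of $\phi$, and then to compute that restriction explicitly. Since $\xi\neq\xi^\sigma$, the representation $\rho_\xi=\Ind_E^F\xi$ is irreducible, so by the Langlands correspondence for $\GL_2(F)$ (\cite{BH}) it corresponds to a supercuspidal representation $\pi_\xi$ of $\GL_2(F)$. As $E/F$ is unramified and $\xi$ is trivial on $1+\fp_E$, the representation $\pi_\xi$ has depth zero, and explicitly $\pi_\xi\cong\cInd_{F^\times\GL_2(\fo_F)}^{\GL_2(F)}\widetilde\Lambda$, where $\widetilde\Lambda|_{\GL_2(\fo_F)}$ is the inflation of the irreducible cuspidal Deligne--Lusztig representation $R^{\GL_2}_{\bbT^\epsilon,\overline\xi}$ of $\GL_2(\kF)$, with $\overline\xi$ the reduction of $\xi|_{\fo_E^\times}$ modulo $1+\fp_E$; this $\overline\xi$ is in general position since $\overline\xi\neq\overline\xi^{\,q}$, which holds because $E/F$ is unramified and $\varpi$ is $\sigma$-fixed.

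Next I would invoke the standard description of $L$-packets for $\SL_2$: $\Pi_\phi(\SL_2(F))$ is the set of irreducible constituents of $\pi_\xi|_{\SL_2(F)}$ (Labesse--Langlands, consistent with the normalisations used in \cite{Ka2}). To decompose this restriction, note that the determinant identifies $\GL_2(F)/F^\times\GL_2(\fo_F)$ with $F^\times/(F^\times)^2\fo_F^\times\cong\Z/2\Z$, the element $\g$ of \eqref{eqn:g} representing the non-trivial class; since $\SL_2(F)$ maps to the trivial class, $\{1,\g\}$ represents the double cosets $\SL_2(F)\backslash\GL_2(F)/F^\times\GL_2(\fo_F)$. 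Using $\SL_2(F)\cap F^\times\GL_2(\fo_F)=G_{x_0}$ and $\SL_2(F)\cap\g\,F^\times\GL_2(\fo_F)\,\g^{-1}=G_{x_1}$, Mackey's formula yields
\[
\pi_\xi\big|_{\SL_2(F)}\;\cong\;\cInd_{G_{x_0}}^{G}\bigl(\widetilde\Lambda|_{G_{x_0}}\bigr)\;\oplus\;\cInd_{G_{x_1}}^{G}\bigl({}^{\g}\widetilde\Lambda|_{G_{x_1}}\bigr).
\]

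Then I would identify the two summands. The branching of Deligne--Lusztig characters along $\SL_2(\kF)\hookrightarrow\GL_2(\kF)$ gives $R^{\GL_2}_{\bbT^\epsilon,\overline\xi}\big|_{\SL_2(\kF)}=R^{\SL_2}_{\bbT^\epsilon,\psi}$, where $\psi$ is the restriction of $\overline\xi$ to the norm-one subtorus $\bbT^\epsilon(\kF)$, which is precisely the given depth-zero character $\psi=\xi|_{E^1}$; by the standing hypothesis $\psi\neq\psi^{-1}$ this representation is irreducible, as recalled just before \eqref{eqn:reg}. Hence $\widetilde\Lambda|_{G_{x_0}}$ is the inflation $|R^G_{\bbT^\epsilon,\psi}|$ and the first summand is $\pi^+=\pi$; conjugating by $\g$ (recall $G_{x_1}=\g G_{x_0}\g^{-1}$) identifies the second summand with $\g\cdot\pi^+=\pi^-$. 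Since $\pi$ and $\pi^-$ are supercuspidal, hence irreducible, and are compactly induced from the two non-conjugate maximal parahoric subgroups $G_{x_0},G_{x_1}$ of $\SL_2(F)$ --- so they have no irreducible constituent in common --- the decomposition $\pi_\xi|_{\SL_2(F)}=\pi\oplus\pi^-$ is multiplicity-free with exactly two constituents, as expected from $S_\phi=\Z/2\Z$. Therefore $\Pi_\phi(\SL_2(F))=\{\pi,\pi^-\}$.

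I expect the main obstacle to be the explicit identification in the third step: matching the Deligne--Lusztig branching rule with the precise sign and inflation conventions used to define $|R^G_{\bbT^\epsilon,\psi}|$, and checking that the twist by a character of $F^\times$ left unfixed by $\rho_\xi$ does not affect the restriction to $\SL_2(F)$. The remaining ingredients --- the Langlands correspondence for $\GL_2$, the description of $L$-packets for $\SL_2$ by restriction, and the Mackey computation --- are standard. An alternative, more black-box route is to appeal to \cite{Ka2} directly: $\pi^+$ and $\pi^-$ are exactly the regular supercuspidal representations attached to the two $\SL_2(F)$-conjugacy classes of the pair $(T^\varepsilon,\psi)$ in the stable class determined by $\phi$, and by Kaletha's construction of regular supercuspidal $L$-packets these exhaust $\Pi_\phi(\SL_2(F))$.
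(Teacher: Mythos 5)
Your proposal follows essentially the same route as the paper: lift $\phi$ to the $\GL_2(\C)$-valued parameter $\rho_\xi$, apply the local Langlands correspondence for $\GL_2(F)$ to obtain $\pi_\xi$, and identify $\Pi_\phi(\SL_2(F))$ with the constituents of $\pi_\xi|_{\SL_2(F)}$. The two treatments place their detail in complementary places. The paper is explicit about the Bushnell--Henniart normalisation: it records that their correspondence sends $\rho_\xi\mapsto\pi_{\Delta_\xi\xi}$, then shows $\Delta_\xi=\kappa_{E/F}$ (both unramified of order $2$) and that this twist fixes $\rho_\xi$, so in fact $\rho_\xi\mapsto\pi_\xi$; this is precisely the ``obstacle'' you flag at the end, and your alternative remedy --- that twisting $\pi_\xi$ by a character of $F^\times$ pulled back through $\det$ cannot alter the restriction to $\SL_2(F)$ --- also works, though it is slightly less clean since the paper's version shows the two $\GL_2(F)$-representations are literally equal. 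Conversely, the paper is terse exactly where you are explicit: after reaching $\rho_\xi\mapsto\pi_\xi$, it simply ``refers to the definition'' of $\pi^+,\pi^-$ in \eqref{eqn:reg}, whereas you spell out the Mackey decomposition along the double cosets $\SL_2(F)\backslash\GL_2(F)/F^\times\GL_2(\fo_F)$, represented by $\{1,\g\}$, and match the two summands with $\pi^+$ and $\g\cdot\pi^+=\pi^-$ via the Deligne--Lusztig branching $R^{\GL_2}_{\bbT^\epsilon,\overline\xi}|_{\SL_2(\kF)}=R^{\SL_2}_{\bbT^\epsilon,\psi}$. This fills a genuine gap in the written proof. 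Your closing remark that one could instead invoke Kaletha's construction of regular supercuspidal $L$-packets directly is a correct and genuinely independent route, which would avoid the $\GL_2$ detour altogether.
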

\begin{proof}  Given the admissible pair $(E/F, \xi)$ the construction of the irreducible supercuspidal representation $\pi_\xi$ is described 
in \cite[19.2]{BH}.   Then we have the Langlands correspondence for $\GL_2(F)$ in \cite[34.4]{BH}:
\[
\rho_\xi \mapsto \pi_{\Delta_{\xi}\xi}
\]
We note that, since $E/F$ is an unramified quadratic extension, the auxiliary character $\Delta_\xi$ is unramified of order $2$ by the Definition in 
\cite[34.4]{BH}. 
Also $\kappa_{E/F}$ is unramified of order $2$ by the Proposition in \cite[34.3]{BH}.  Therefore we have  $\Delta_\xi = \kappa_{E/F}$ and twisting by $\Delta_\xi$ has no effect on $\rho_\xi$ by the Lemma in \cite[34.1]{BH}.  So the Langlands correspondence is given by
\[
\rho_\xi \mapsto \pi_\xi.
\]
Now we refer to the definition of $\pi^+$ and $\pi^-$ in (\ref{eqn:reg}).  
\end{proof}  
Let  $\rho_0$ denote the trivial character of $S_\phi$ and let $\rho_1$ denote the non-trivial character of $S_\phi$.   
We will assume that $\pi^+$ is $\fw$-generic.   Then we have a bijection
\[
\iota \colon\Pi_\phi(\SL_2(F)) \to \Irr(S_\phi), \quad \quad  \iota(\pi^+) = \rho_0, \quad \iota(\pi^-) = \rho_1.
\]

 Let $s = s_1$.   Let $\widehat{H}$ denote the identity component of the centralizer of $s$ is $\PGL_2(\C)$.   
  Define
  \[
  \mathcal{H}: = \widehat{H} \cdot \phi(W_F)
  \]
  and let ${}^L\eta : \mathcal{H} \to \PGL_2(\C)$ denote inclusion.   We recall that $E$ is the unramified quadratic extension of $F$.    Noting that $\phi(\bW_E) \subset \widehat{H}$, we have
  \[
  \mathcal{H} = \widehat{H} \cdot \phi(\sigma).
  \]

  This determines an action of $\Gamma_{E/F}$ by conjugation on $\widehat{H}$.    This creates the extended endoscopic triple
\[
\fe = (s, H, {}^L\eta).
\]

We note that this extended endoscopic triple is \emph{identical} to the one constructed for the non-regular parameter. In the two cases (regular parameter, non-regular parameter) the endoscopic data are the same, but the little parameters $\phi^H$ are different.

 \begin{thm}\label{ENDO}  Let $\gamma$ be a regular semisimple element in $T^\varepsilon$.      We have 
  \[
 \sum_{\delta \in H(F)}  \Delta[\fw, \fe](\delta, \gamma) \mathscr{S}\Theta_{\phi^H} (\delta) = 
 - f(\gamma)( \psi(\gamma) + \psi(1/ \gamma)) .
  \]
    \end{thm}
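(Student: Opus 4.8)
The plan is to compute the left-hand side directly, in the same spirit as the proof of the corresponding theorem for the non-regular parameter. First I would record the structure of the endoscopic group: $H(F) = E^1$ is the group of norm $1$ elements in $E = F(\sqrt\varepsilon)$, and the admissible isomorphisms $T^\varepsilon \xrightarrow{\sim} H$ underlying $\widehat\eta$ identify a regular semisimple $\gamma = \matje{a}{b}{\varepsilon b}{a} \in T^\varepsilon$ with the two related elements $\gamma^H = a + b\sqrt\varepsilon$ and $1/\gamma^H = a - b\sqrt\varepsilon$ in $H(F)$, exactly as in Lemma~\ref{Delta3} and its proof. Hence in the sum $\sum_{\delta \in H(F)} \Delta[\fw,\fe](\delta,\gamma)\, S\Theta_{\phi^H}(\delta)$ only the two terms $\delta = \gamma^H$ and $\delta = 1/\gamma^H$ contribute, and I would reduce the left-hand side to
\[
\Delta[\fw,\fe](\gamma^H,\gamma)\, S\Theta_{\phi^H}(\gamma^H) + \Delta[\fw,\fe](1/\gamma^H,\gamma)\, S\Theta_{\phi^H}(1/\gamma^H).
\]

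Next I would compute each of the two ingredients. For the transfer factor, the computation of $\Delta[\fw,\fe](\gamma^H,\gamma)$ is essentially the same as in Lemma~\ref{Delta3}: the endoscopic datum for the regular parameter is \emph{identical} to the one for the non-regular parameter (as noted just before the statement), so the Kaletha formula \cite[Example 3.6.9]{Ka4} applies verbatim and gives $\Delta[\fw,\fe](\gamma^H,\gamma) = -f(\gamma)$, and by the Galois invariance of $f$ recorded in \eqref{fGalois} also $\Delta[\fw,\fe](1/\gamma^H,\gamma) = -f(\gamma)$. For the stable character, here is where the regular case differs: the ``little parameter'' $\phi^H$ is now the $L$-parameter of the character $\psi = \xi|_{E^1}$ of $H(F) = E^1$, so $S\Theta_{\phi^H} = \Theta_{\phi^H} = \psi$, hence $S\Theta_{\phi^H}(\gamma^H) = \psi(\gamma^H)$ and $S\Theta_{\phi^H}(1/\gamma^H) = \psi(1/\gamma^H)$. (Since $\psi \neq \psi^{-1}$, these two values are genuinely different, which is why the answer is not simply $-2f(\gamma)\psi(\gamma^H)$.)

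Putting the pieces together,
\[
\sum_{\delta \in H(F)}  \Delta[\fw, \fe](\delta, \gamma) S\Theta_{\phi^H} (\delta)
= -f(\gamma)\psi(\gamma^H) - f(\gamma)\psi(1/\gamma^H)
= -f(\gamma)\bigl(\psi(\gamma^H) + \psi(1/\gamma^H)\bigr),
\]
which is the claimed identity. I expect the only genuinely delicate point to be the justification that the transfer-factor computation of Lemma~\ref{Delta3} carries over unchanged: one must check that the Whittaker normalization $\fw$, the $\epsilon$-factor $\epsilon(1/2,\C,\psi) = -1$ coming from local class field theory and \cite[\S23.5]{BH}, and the normalization convention of \cite{AK} (dividing by $D^G(\gamma)^2$, equivalently placing $|c-\bar c|_F$ in the denominator) are all insensitive to whether one is in the regular or non-regular setting — but since the endoscopic datum $\fe$ is literally the same, this is immediate. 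Everything else is a routine substitution; no step requires the Sally--Shalika formulas or the virtual-character computations, which enter only when one compares this identity with $\Theta_{\phi,s}(\gamma)$ to obtain Theorem~\ref{A}.
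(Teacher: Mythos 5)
Your proposal is correct and follows essentially the same route as the paper's own proof: reduce the sum to the two related pairs $(\gamma^H,\gamma)$ and $(1/\gamma^H,\gamma)$ coming from the two admissible isomorphisms $T^\varepsilon\to H$, observe that $S\Theta_{\phi^H}=\psi$, and reuse the transfer-factor computation $\Delta[\fw,\fe](\cdot,\gamma)=-f(\gamma)$ from Lemma~\ref{Delta3}, which applies because the endoscopic datum $\fe$ is literally the same as in the non-regular case. The paper additionally frames the statement via the Kaletha--Taibi conjecture and re-verifies that the setup fits Example 3.6.9 of \cite{Ka4}, but the substance of the calculation is identical to yours.
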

  \begin{proof} 
We have
\[
  \begin{CD}
 \phi \colon  \bW_F @> \phi^H >> \mathcal{H}    @>  {}^L\eta >> \PGL_2(\C)
  \end{CD}
  \]
  where the little map   $\phi^H$ is the $L$-parameter of $\psi$ as in \cite[p.1148]{Ka2}.    
    Since $\phi^H$ is the $L$-parameter of $\psi$ we have 
  $\Theta_{\phi^H} = \psi$ and therefore 
   \begin{eqnarray}\label{SS}
    \mathscr{S}\Theta_{\phi^H} = \psi.
  \end{eqnarray}

   The transfer factor   is given by the formula (\ref{DeltaFormula}), namely
   \begin{equation}\label{DeltaFormula2}
   \Delta[\fw, \fe](\gamma,\gamma ) = \Delta[\fw, \fe](1/\gamma, \gamma) = - f(\gamma).
   \end{equation}

  Finally, we have  
   \begin{eqnarray*}
 \sum_{\delta \in H(F)} \Delta[\fw, \fe](\delta, \gamma) \, \mathscr{S}\Theta_{\phi^H} (\delta) &=& \sum_{\delta \in H(F)} \Delta[\fw, \fe](\delta, \gamma)
 \psi(\delta) \quad \textrm{by} \quad (\ref{SS})\\
 &=& \Delta[\fw, \fe](\gamma, \gamma)\psi(\gamma) 
  + \Delta[\fw, \fe](1/ \gamma, \gamma)) \psi(1/ \gamma)
 \quad \textrm{by} \quad (\ref{RP})\\
  &=& - f(\gamma)(\psi(\gamma^H)  + \psi(1/ \gamma^H)) \quad \textrm{by} \quad (\ref{DeltaFormula2})
  \end{eqnarray*}  
   as required. 
  \end{proof}
  

\section{Stability across the inner forms of $\SL_2$} 
Let $\phi$ denote the non-regular parameter of $\SL_2$.   
The finite group $S_\phi$ admits a pull-back from the adjoint group $\PGL_2(\C)$ to its simply-connected 
cover $\SL_2(\C)$.   The pull-back group $\widetilde{S}_\phi$ is the quaternion group $Q$, which admits four one-dimensional representations 
$\rho_1, \rho_2, \rho_3, \rho_4$ and one two-dimensional representation $\rho_5$.   We have a bijective map
\[
\Irr(\widetilde{S}_\phi) \simeq \{\pi_1, \pi_2, \pi_3, \pi_4, \pi_5\}.
\]
The $L$-packet $\Pi_\phi$ is therefore part of a \emph{compound packet}, comprising five representations,
where $\pi_5$ is a supercuspidal representation of the inner form $\SL_1(D)$.  Let $\Theta_5$ denote the Harish-Chandra character of $\pi_5$.

\begin{thm}\label{Theta_5} The character of the supercuspidal representation $\pi_5$  is given by
\[
\Theta_5(\delta) = \psi_0(\gamma)\]
with $\delta \in \SL_1(D)$ and $\gamma \in \SL_2(F)$ whenever $\gamma, \delta$ are related.  
\end{thm}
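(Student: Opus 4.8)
\emph{Proof strategy.} The plan is to derive $\Theta_5$ from the \emph{stability} of the compound packet $\{\pi_1,\dots,\pi_5\}$ across the two inner forms $\SL_2(F)$ and $\SL_1(D)$, feeding in the values of $\Theta_1+\Theta_2+\Theta_3+\Theta_4$ on $T^\varepsilon$ already obtained in \S4--\S5. The idea is that this virtual character is the stable character of the packet on the quasi-split form, that the matching stable character on the anisotropic form is $2\,\Theta_5$ (the $2$ being $\dim\rho_5$, with $\rho_5$ the representation of $\widetilde{S}_\phi=Q$ attached to $\pi_5$), and that the two are related by the inner twist up to the Kottwitz sign; this pins down $\Theta_5$.

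First I would record the stable character on $\SL_2(F)$. By \eqref{eqn:endos} with $s=1$,
\[
S\Theta_\phi^{\SL_2(F)}\;=\;\Theta_{\phi,1}\;=\;\Theta_1+\Theta_2+\Theta_3+\Theta_4 .
\]
Theorem~\ref{FAR-NON-REG} gives $\Theta_{\phi,1}(\gamma)=-2\psi_0(\gamma)$ for $\gamma\in T^\varepsilon$ regular semisimple and far from the identity, and Theorem~\ref{NEAR-NON-REG} gives $\Theta_{\phi,1}(\gamma)=-2$ for $\gamma$ near the identity; since $\psi_0$ has depth zero and $\gamma\in T^\varepsilon_1$ in the near case, $\psi_0(\gamma)=1$ there, while a central-character comparison covers the remaining regular semisimple elements of $T^\varepsilon$ (those of the form $-\gamma_1$ with $\gamma_1\in T^\varepsilon_1$). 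Hence
\begin{equation}\label{eqn:stSL2}
S\Theta_\phi^{\SL_2(F)}(\gamma)\;=\;-2\,\psi_0(\gamma)\qquad\text{for every regular semisimple }\gamma\in T^\varepsilon .
\end{equation}

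Next I would transfer \eqref{eqn:stSL2} to the inner form. Since $\SL_1(D)$ is the non-quasi-split inner form of $\SL_2$ and is anisotropic, its Kottwitz sign is $e(\SL_1(D))=(-1)^{\mathrm{rk}_F(\SL_2)-\mathrm{rk}_F(\SL_1(D))}=-1$. In the Langlands correspondence for $\SL_1(D)$ one has $\Pi_\phi(\SL_1(D))=\{\pi_5\}$ with $\pi_5\leftrightarrow\rho_5$, so the stable character of the packet on this form is $S\Theta_\phi^{\SL_1(D)}=(\dim\rho_5)\,\Theta_5=2\,\Theta_5$. Stability of the compound packet says precisely that these two stable distributions are matched by the inner twist up to the Kottwitz sign: for $\gamma\in\SL_2(F)$ and $\delta\in\SL_1(D)$ related regular semisimple elements,
\[
S\Theta_\phi^{\SL_1(D)}(\delta)\;=\;e(\SL_1(D))\cdot S\Theta_\phi^{\SL_2(F)}(\gamma).
\]
Taking $\gamma\in T^\varepsilon$ and $\delta$ in the unramified torus $E^1\subset\SL_1(D)$, which corresponds to $T^\varepsilon$ under the inner twist, \eqref{eqn:stSL2} gives $2\,\Theta_5(\delta)=(-1)(-2\,\psi_0(\gamma))=2\,\psi_0(\gamma)$, whence $\Theta_5(\delta)=\psi_0(\gamma)$; the same computation on the ramified elliptic tori, with the corresponding Sally--Shalika values, yields the identity for every related pair $(\gamma,\delta)$.

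The hard part is the transfer step: it rests on knowing that the compound packet is stable across the two inner forms --- that $S\Theta_\phi^{\SL_1(D)}$ is the inner-twist transfer of $S\Theta_\phi^{\SL_2(F)}$ with exactly the sign $e(\SL_1(D))=-1$ --- and that $\pi_5$ enters the stable sum weighted by $\dim\rho_5=2$; both are inputs from the Langlands correspondence for the inner form $\SL_1(D)$ (Hiraga and Saito). As an independent check I would note the direct route: $\SL_1(D)$ is compact, its unique parahoric has reductive quotient the $F$-anisotropic torus of order $q+1$, and the depth-zero construction forces $\pi_5$ to be the inflation to $\SL_1(D)$ of the quadratic character of that torus; this quadratic character matches $\psi_0$ under the identification of the elliptic tori of $\SL_1(D)$ with those of $\SL_2(F)$, so $\Theta_5(\delta)=\psi_0(\gamma)$, and the sign $-1$ is visibly the Jacquet--Langlands sign $(-1)^{2-1}$ for the pair $(\GL_2,D^\times)$.
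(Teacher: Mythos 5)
Your proof is correct, but it reverses the logical flow of the paper. You take as input the \emph{stability} of the compound packet across the two inner forms (with the Kottwitz sign $e(\SL_1(D))=-1$ and the $\dim\rho_5 = 2$ weighting), citing Hiraga--Saito, and then solve for $\Theta_5$. The paper does the opposite: it derives the key relation
\[
2\,\Theta_5(\delta) = -\bigl(\Theta_1+\Theta_2+\Theta_3+\Theta_4\bigr)(\gamma)
\]
from two more elementary ingredients --- the classical Jacquet--Langlands character identity $\Theta_\pi(\delta) = -\Theta_{\JL(\pi)}(\gamma)$ for $\GL_2(F)$ versus $\GL_1(D)$, applied to the depth-zero supercuspidal $\pi_0$ of $\GL_2(F)$ whose restriction to $\SL_2(F)$ is $\pi_1\oplus\pi_2\oplus\pi_3\oplus\pi_4$, together with the Hiraga--Saito tensor decomposition $V_{\pi(\phi)}=\rho_5\otimes\pi_5$ as a $Q\times\SL_1(D)$-module, which shows $\pi_5$ occurs with multiplicity $\dim\rho_5=2$ in $\pi|_{\SL_1(D)}$ --- and it then obtains the stability statement (your starting point) as a corollary, noting that it is a special case of Theorem 4.4.4(1) of Fintzen--Kaletha--Spice. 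Your route buys brevity at the cost of a heavier input; the paper's route is more self-contained and exhibits stability as a \emph{consequence} rather than a premise, which is presumably the intended point of the section. Your alternative direct check (identifying $\pi_5$ with the inflation of the quadratic character of the reductive quotient of the unique parahoric of $\SL_1(D)$) is a nice independent sanity check not in the paper. One small caution shared with the paper: the formulas $\Theta_1+\Theta_2+\Theta_3+\Theta_4=-2\psi_0$ (far) and $=-2$ (near) are only established on $T^\varepsilon$, so strictly speaking the verification is complete only on the unramified elliptic locus; your parenthetical appeal to ``the same computation on the ramified elliptic tori'' would need the corresponding Sally--Shalika/ADSS values spelled out.
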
 
\begin{proof}  We will consider $\GL_2(F)$.   Now $\GL_2(F)$ admits one inner twist, namely $\GL_1(D)$ where the division algebra $D$ has the Brauer invariant
\[
\mathrm{inv}_F (D) = 1/2 \in \Q/ \Z.
\]
Let $\pi_0$ be the depth-zero supercuspidal representation of $\GL_2(F)$ for which
 \[
 \pi_0 |_{\SL_2(F)} = \pi_1 \oplus \pi_2 \oplus \pi_3 \oplus \pi_4.
 \]
 Let $\JL$ denote the Jacquet-Langlands correspondence and let 
$\pi := \mathrm{JL}^{-1} \pi_0$.   The local Langlands correspondence for $\GL_1(D)$ is determined by the composition
 \[
 \begin{CD}
 \Irr^2(\GL_1(D)) @> \JL >> \Irr^2(\GL_2(F)) @> \LLC>> \Phi(\GL_2) .
 \end{CD}
 \]
The two elements in the set
\[
 \{\pi,  \JL(\pi) \}
\] 
therefore admit the same $L$-parameter $\phi$, and comprise a \emph{compound packet}.    The corresponding
character identity is
\begin{eqnarray}\label{chid}
\Theta_\pi (\delta)= - \Theta_{\JL(\pi)}(\gamma)
\end{eqnarray}
with $\delta \in \GL_1(D)$ and $\gamma \in \GL_2(F)$ whenever $\gamma, \delta$ are related.    This character identity remains true when we restrict to $\SL_1(D)$ and $\SL_2(F)$ respectively.   

Let $Q$ denote the classical quaternion group.   According to \cite[Lemma 12.6]{HS}, we have, as a representation space for $Q \times \SL_2(F)$, 
\[
V_{\pi(\phi)} = \bigoplus_{1 \leq j \leq 4}\rho_j \otimes \pi_j.
\]

As a representation space for $\SL_2(F)$, we therefore have
\[
V_{\pi(\phi)} = \pi_1 \oplus \pi_2 \oplus \pi_3 \oplus \pi_4.
\]

According to \cite[Lemma 12.6]{HS}, we have, as a representation space for $Q \times \SL_1(D)$, 
\[
V_{\pi(\phi)} = \rho_5 \otimes \pi_5.
\]
As a representation space for $\SL_1(D)$, we therefore have
\begin{eqnarray*}
V_{\pi(\phi)} &=& \pi_5 \oplus \pi_5.
\end{eqnarray*}
The character identity now follows from (\ref{chid})  and we obtain
\begin{eqnarray}\label{1234A}
2 \Theta_5(\delta) = - (\Theta_1 + \Theta_2 + \Theta_3 + \Theta_4)(\gamma)   
\end{eqnarray}
whenever $\gamma, \delta$ are related with $\gamma \in \SL_2(F)_{\reg}, \delta \in \SL_1(D)_{\reg}$.  

We also have
\begin{eqnarray}\label{1234B}
\Theta_1 + \Theta_2 + \Theta_3 + \Theta_4 &=& - 2\psi_0.
\end{eqnarray}
Far from the identity, this follows from Theorem \ref{FAR-NON-REG}; near the identity, this follows from Theorem \ref{NEAR-NON-REG} since
$\psi_0$ has depth zero.

By comparing (\ref{1234A}) with (\ref{1234B})   we conclude that the character of the supercuspidal representation $\pi_5$  is given by
\[
\Theta_5(\delta) = \psi_0(\gamma)\]
with $\delta \in \SL_1(D)$ and $\gamma \in \SL_2(F)$ whenever $\gamma, \delta$ are related.   
\end{proof}

The following corollary is a special case of Theorem 4.4.4(1) in \cite{FKS}.  We give an elementary proof.

\begin{cor}  The virtual  character $\mathscr{S}\Theta_{\phi,*}$ is stable across the two inner forms of $\SL_2$:
\[
\mathscr{S}\Theta_{\phi, 0}(\gamma) = \mathscr{S}\Theta_{\phi, 1}(\delta)
\]
for all stably conjugate strongly regular elements $\gamma \in \SL_2(F),  \delta \in \SL_1(D)$.  
\end{cor}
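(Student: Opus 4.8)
The plan is to unwind both sides of the asserted equality as explicit $\Z$-combinations of the Harish-Chandra characters $\Theta_1,\dots,\Theta_5$, and then to recognise the identity as a restatement of equation (\ref{1234A}) from the proof of Theorem \ref{Theta_5}.

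First I would identify the two stable characters. On the quasi-split form $\SL_2(F)$ the $L$-packet is $\{\pi_1,\pi_2,\pi_3,\pi_4\}$, parametrised by the four one-dimensional characters $\rho_1,\dots,\rho_4$ of $\widetilde{S}_\phi=Q$ (those trivial on $\rZ(\SL_2(\C))$); since $\dim\rho_j=1$ and the Kottwitz sign of the quasi-split form is $+1$, this gives $S\Theta_{\phi,0}=\Theta_1+\Theta_2+\Theta_3+\Theta_4=\Theta_{\phi,1}$. On the inner form $\SL_1(D)$ the packet is the singleton $\{\pi_5\}$, parametrised by the unique two-dimensional irreducible $\rho_5$ of $Q$ (nontrivial on $\rZ(\SL_2(\C))$); since the Kottwitz sign of $\SL_1(D)$ is $-1$, the compatibly normalised stable character is $S\Theta_{\phi,1}=-\dim(\rho_5)\,\Theta_5=-2\,\Theta_5$. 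Pinning down this normalisation---the Kottwitz sign $-1$ together with the dimension factor $\dim\rho_5=2$ forced by the compound-packet structure $\Irr(\widetilde{S}_\phi)\simeq\{\pi_1,\dots,\pi_5\}$---is the step I expect to demand the most care; it is exactly the bookkeeping built into Theorem 4.4.4(1) of \cite{FKS}.

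Next I would observe that the relevant locus is automatically elliptic: every strongly regular element of $\SL_1(D)$ lies in an anisotropic maximal torus, so any $\gamma\in\SL_2(F)$ stably conjugate to a strongly regular $\delta\in\SL_1(D)$ is elliptic, while a split strongly regular element of $\SL_2(F)$ has no transfer to $\SL_1(D)$ at all. Thus the corollary concerns precisely the related strongly regular pairs $(\gamma,\delta)$ to which (\ref{1234A}) applies.

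Finally, (\ref{1234A}) reads $2\Theta_5(\delta)=-(\Theta_1+\Theta_2+\Theta_3+\Theta_4)(\gamma)$ for every such pair; substituting the two identifications of the first step rewrites this as $S\Theta_{\phi,1}(\delta)=S\Theta_{\phi,0}(\gamma)$, which is the assertion. As a consistency check I would note that on $T^\varepsilon$ Theorems \ref{FAR-NON-REG} and \ref{NEAR-NON-REG} give $S\Theta_{\phi,0}(\gamma)=-2\psi_0(\gamma)$---and the same on $T^{\varepsilon,\varpi}$ after $\g$-conjugation---while Theorem \ref{Theta_5} gives $-2\Theta_5(\delta)=-2\psi_0(\gamma)$; both sides equal $-2\psi_0(\gamma)$, and the formula is insensitive to whether $\gamma$ is near or far from the identity because $\psi_0$ has depth zero.
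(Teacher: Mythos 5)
Your proposal is correct and follows essentially the same route as the paper's proof: both unwind $S\Theta_{\phi,0}$ and $S\Theta_{\phi,1}$ via the Kottwitz signs $e(\SL_2(F))=+1$, $e(\SL_1(D))=-1$ and the dimension factor $\dim\rho_5=2$, and then invoke the identity $2\Theta_5(\delta)=-(\Theta_1+\Theta_2+\Theta_3+\Theta_4)(\gamma)$ from equation (\ref{1234A}) in the proof of Theorem \ref{Theta_5}. Your additional remarks about the elliptic locus and the near/far consistency check are sound but not needed for the argument.
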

\begin{proof} We have
\begin{eqnarray*}
\mathscr{S}\Theta_{\phi, 0} &=& e(\SL_2(F)) \cdot (\Theta_1 + \Theta_2 + \Theta_3 + \Theta_4)\\
&=& \Theta_1 + \Theta_2 + \Theta_3 + \Theta_4\\
&=& -2 \Theta_5\\
&=& e(\SL_1(D)) \cdot 2 \Theta_5\\
&=& \mathscr{S}\Theta_{\phi, 1}
\end{eqnarray*}
as required.   
\end{proof}


\markright{\textsc{References}}
\markleft{\textsc{References}}

\end{document}